\newtheorem{theorem}{Theorem}[section]
\newtheorem{lemma}[theorem]{Lemma}
\newtheorem{corollary}[theorem]{Corollary}
\newtheorem{proposition}[theorem]{Proposition}
\theoremstyle{definition}
\theoremstyle{remark}
\newtheorem{remark}[theorem]{Remark}
\newtheorem{example}[theorem]{Example}
\numberwithin{equation}{section}
\newcommand{\F}{\mathcal{F}}
\newcommand{\B}{\mathcal{B}}
\renewcommand{\S}{\mathcal{S}} % CHANGE THAT!
\newcommand{\R}{\mathbb{R}}
\newcommand{\N}{\mathbb{N}}
\newcommand{\weakly}{\rightharpoonup}
\renewcommand{\t}{\ dt} % CHANGE THAT
\newcommand{\s}{\ ds}
\renewcommand{\i}{\infty} % CHANGE THAT
\newcommand{\SO}{\mathrm{SO}}
\definecolor{dkblue}{rgb}{0,0,0.5}
\definecolor{dkgreen}{rgb}{0,0.5,0.0}
\definecolor{dkred}{rgb}{0.5,0,0.0}
\definecolor{dkpurp}{rgb}{0.5,0,0.5}
\definecolor{aqua}{rgb}{0.0, 0.8, 0.8}
\newcommand{\MJ}{\color{dkgreen}}
\newcommand{\EEE}{\color{black}}
\newcommand{\ACH}{\color{red}}
\begin{document}

\title[$\Gamma$-convergence of a discrete Kirchhoff rod energy]{$\pmb\Gamma$-convergence of a discrete Kirchhoff rod energy}

%    author one information
\author{Patrick Dondl}
\address{Patrick Dondl\\
Abteilung für Angewandte Mathematik\\
Albert-Ludwigs-Universität Freiburg\\
Hermann-Herder-Str.~10\\
79104 Freiburg i.~Br.}
\email{patrick.dondl@mathematik.uni-freiburg.de}

%    author two information
\author{Coffi Aristide Hounkpe}
\address{Coffi Aristide Hounkpe\\
Abteilung für Angewandte Mathematik\\
Albert-Ludwigs-Universität Freiburg\\
Hermann-Herder-Str.~10\\
79104 Freiburg i.~Br.}
\email{coffi.hounkpe@mathematik.uni-freiburg.de}

%    author three information
\author{Martin Jesenko}
\address{Martin Jesenko\\
Univerza v Ljubljani \\
Fakulteta za gradbeništvo in geodezijo \\
Jamova cesta 2 \\
1001 Ljubljana}
\curraddr{}
\email{martin.jesenko@fgg.uni-lj.si}
\thanks{}

\subjclass[2010]{49M25, 35A15, 74K10}

\date{\today}

\begin{abstract}
This work is motivated by the classical discrete elastic rod model by Audoly \textit{et al}. We derive a discrete version of the Kirchhoff elastic energy for rods undergoing bending and torsion and prove $\Gamma$-convergence to the continuous model. This discrete energy is given by the bending and torsion energy of an interpolating conforming polynomial curve and provides a simple formula for the bending energy depending in each discrete segment only on angle and adjacent edge lengths. For the $\liminf$-inequality, we need to introduce penalty terms to ensure arc-length parametrization in the limit. For the recovery sequence a  discretization with equal Euclidean  distance between consecutive points is constructed. Particular care is taken to treat the interaction between bending and torsion by employing a discrete version of the Bishop frame.
 
\end{abstract}
\keywords{Kirchhoff rods, Discrete rods, $\Gamma$-Convergence}
\maketitle

\tableofcontents

\section{Introduction}
In this work, we are concerned with the study of \emph{Kirchhoff rods}, i.e., slender, essentially one-dimensional elastic objects undergoing bending and twisting. The Kirchhoff rod model is commonly used in engineering applications and can be derived from fully nonlinear 3d elasticity as a slender body limit when assuming an isotropic material and a rotationally symmetric rod cross section as a special case in \cite{MoraMueller}.

A number of approaches have been proposed to numerically treat such rod theories. We mention for example work by Bartels \emph{et al.}~\cite{bartels2013simple}, where a $W^{2,2}$-conforming discretization for the bending energy was combined with a linearization approach for the inextensibility constraint for Euler-Bernoulli beams. In \cite{Bartels2} similar techniques were used to discretize rods including twist. For full Cosserat rods which also include the possibility for stretching and shearing, we refer to numerical work by, e.g., Jung \emph{et al.}~\cite{Jung_Leyendecker}. A particularly efficient approach to the numerical treatment of Kirchhoff rods was proposed by Bergou \emph{et al.}~\cite{bergou2008discrete}, further developed, e.g., in \cite{Lestringant,Korner}.

While for conforming discretizations energetic ($\Gamma$-)convergence results in the sense that discrete energies faithfully reproduce the continuum model are comparatively easy to obtain, some more work is needed to treat discrete rods since piecewise affine curves in general yield infinite bending energy. In light of \cite[Table 1]{Wardetzky}, we refer to \cite{Alibert1, Alibert2, , Bruckstein, Espanol, Iglesias} for $\Gamma$-convergence results, as well as to \cite{Wardetzky} for a convergence result in Hausdorff distance, for discrete beam theories all without twisting.

We also follow a $\Gamma$-convergence approach to rigorously derive a continuum limit for the full Kirchhoff theory including torsion. Constructing an equivalence between discrete (piecewise affine) rods and appropriate spline interpolations enables us to show convergence of torsion-free frames to their continuum limits. As in the original \cite{bergou2008discrete} this allows to include a scalar parameter describing twist with respect to this torsion-free frame. Care was taken to include physically relevant boundary conditions in our theory, where, e.g., the full frame may be prescribed on either end of the rod.

The remainder of this work is organized as follows. In section \ref{sec:modeling} we introduce the Kirchhoff rod theory and the torsion-free Bishop frame for both inextensible continuum rods as well as our discretization including a penalty ensuring arc-length parametrization in the limit. At the end of this section, we also state our main convergence result, Theorem \ref{theo:main}, and derive an equivalent energy that depends only on (next-)nearest neighbor interactions. Section \ref{sect:lowerbound} is concerned with the lower-bound estimate for $\Gamma$-convergence. Sections \ref{sect:app} and \ref{sect:recovery} are then finally devoted to the construction of a recovery sequence.

\section{Modeling inextensible elastic rods} \label{sec:modeling}

%\todo[inline]{Describe standard continuum model\\
%Introduce Bishop frame (cite Bishop paper!)\\
%Describe Audoly Model\\
%Audoly Model very simple for computation - thus would like to keep simple discrete %structure. \\
%Our discrete energy \\
%Constructing corresponding spline \\
%Constraints
%}

\subsection{Energy of a framed rod}

In \cite{MoraMueller}, Mora and Müller derive the energy of a framed rod in terms of $\Gamma$-convergence starting from nonlinear elasticity. We briefly recapitulate their results here.

Starting with a standard nonlinear elastic energy for a slender body on a reference domain $\Omega_h = (0,L) \times hS$ with cross section $S \subset \R^2$, they obtain --  by means of dimensional reduction -- a limiting energy for $h\to 0$ of the form
\begin{equation} \label{eq:energy-rod-general}
    \frac{1}{2} \int_{0}^{L} Q( \mathbf{R}(s)^{\top} \mathbf{R}'(s) ) \ ds
\end{equation}
with a quadratic form $Q : \R^{3 \times 3} \to \R $ if the deformed rod is described by a framed curve $(u,d_2,d_3) \in W^{2,2}((0,L);\R^3) \times W^{1,2}((0,L);\R^3) \times W^{1,2}((0,L);\R^3)$ with frame $(u'(s),d_2(s),d_3(s))=\mathbf{R}(s) \in \SO(3)$ for (almost) all $s\in (0,L)$.  Above, $\mathbf{R}'(s)$ denotes the derivative of $\mathbf{R}(s)$ with respect to $s$. The quadratic form $Q$ is derived by linearization and relaxation from the nonlinear elastic energy density.

We now note that $\mathbf{R}(s)^{\top} \mathbf{R}'(s)$ for $\mathbf{R} \in W^{1,2}((0,L); \SO(3))$ is a skew-symmetric matrix.  For any skew-symmetric matrix $ A \in \R^{3 \times 3} $, there exists an axial vector $ \omega \in \R^{3} $ such that $ Ax = \omega \times x $. If $ \omega(s) $ is the axial vector of $ \mathbf{R}(s)^{\top} \mathbf{R}'(s) $, then \eqref{eq:energy-rod-general} can be equivalently expressed as
\[  \frac{1}{2} \int_{0}^{L} q( \omega(s) ) \ ds \]
with a quadratic form $q : \R^{3} \to \R $.

In \cite[Remark 3.5]{MoraMueller}, for the case of an isotropic material elastic energy density, $q$ is explicitly calculated.
%
%\[ \mathfrak{q}( \omega ) = \mu \tau \omega_{1}^{2} + \frac{ \mu ( 3 \lambda + 2 \mu ) }{ \lambda + \mu } ( I_{2} \omega_{2}^{2} + I_{3} %\omega_{3}^{2} ) \]
%
%
One obtains (after aligning the coordinates with principle axes of the second moment of inertia of the cross section $S$) the well-known expression for energy density of an inextensible rod, i.e.,
\[
q( \omega ) 
= G J_1 \omega_{1}^{2} 
+ E ( J_{2} \omega_{2}^{2} + J_{3} \omega_{3}^{2} ) 
\]
with $G$ being the shear modulus and $ E $ the Young modulus  of the material and with torsion  constant $J_1$ as well as moments of inertia $ J_{2} $ and $ J_{3} $ with respect to principal axes being properties of the cross section $S$.

For cross sections for which $J_2=J_3=J$, e.g., circular rods, we simply obtain 
\begin{equation}\label{eq:energy-rod-isotropic}
q(\omega) = G J_1 \omega_{1}^{2} 
+ E J \kappa^2 
\end{equation}
for the squared rod curvature $\kappa^2 = \omega_2^2+\omega_3^2$.% = (u'')^2$.

\subsection{Bishop frame}  \label{sec:bishp}
It is now convenient to introduce a torsion-free frame, the so-called \emph{Bishop frame} $ \mathbf{B}=(b_1,b_2,b_3) $. Its defining property is that the skew-symmetric matrix $\mathbf{B}(s)^{\top} \mathbf{B}'(s)$ takes the form
\[ \mathbf{B}(s)^{\top} \mathbf{B}'(s) = \begin{pmatrix} 0 & - k_{1}(s) & - k_{2}(s) \\ k_{1}(s) & 0 & 0 \\ k_{2}(s) & 0 & 0 \end{pmatrix}. \]
By \cite{bishop} there exists such a frame for a arc-length parametrized $C^2$-curve.

The matrix equation above can be rewritten as
\begin{align*}
    b_{1}'(s) &= k_1(s) b_{2}(s) + k_2(s) b_{3}(s), \\
    b_{2}'(s) &= - k_1(s) b_{1}(s), \\
    b_{3}'(s) &= - k_2(s) b_{1}(s).
\end{align*}
Automatically $ b_{1}'(s) = u''(s) \perp u'(s) = b_{1}(s) $.
Therefore, for any $s$ we are looking for an orthonormal basis for $ \{ b_{1}(s) \}^{\perp} $ such that its derivative has the direction $ b_{1}(s) $.
Since $ u''(s) \perp u'(s) $, there exists an unique $ \Omega(s) $ with $ \Omega(s) \perp u'(s) $ 
such that $ u''(s) = \Omega(s) \times u'(s) $, namely
\[ \Omega(s) = u'(s) \times u''(s). \]
If we decompose $ b_{1}'(s) = k_1(s) b_{2}(s) + k_2(s) b_{3}(s) $, with $ ( b_{2}(s) , b_{3}(s) ) $ still being an arbitrary orthonormal basis of $ \{ b_{1}(s) \}^{\perp} $, then
\[ \Omega(s) = k_{1}(s) b_{3}(s) - k_{2}(s) b_{2}(s). \]
Hence, the system for a Bishop frame reads
\begin{align*}
    b_{2}'(s) &= \Omega(s) \times b_{2}(s), \\
    b_{3}'(s) &= \Omega(s) \times b_{3}(s),
\end{align*}
noting that $u$ and thus $u'$, $u''$ and $ \Omega $ are given.

Thus, a Bishop frame can easily be constructed for an arc-length parametrized $W^{2,2}$-curve $u$, yielding a $W^{1,2}$-frame. Fixing such a Bishop frame $ \mathbf{B}$, any $W^{1,2}$-frame $ \mathbf{R} $ on the curve $u$ can be expressed by a rotation angle $ \theta $ -- a function in $W^{1,2}(0,L)$ -- of its second and third axis around $ u'(s)=b_1(s) $, i.e.
\begin{equation} 
\label{eq:arbitrary-frame}
\mathbf{R}(s) = \mathbf{B}(s) \mathbf{\Theta}^{\theta}(s) 
\quad \mbox{for} \quad 
 \mathbf{\Theta}^{\theta}(s)  = 
\begin{pmatrix} 1 & 0 & 0 \\ 0 & \cos \theta(s) & - \sin \theta(s) \\ 0 & \sin \theta(s) & \cos \theta(s) \end{pmatrix}.
\end{equation}
Employing this denotation, the set of all Bishop frames reads $ \{ \mathbf{B} \mathbf{\Theta}^{\theta} : \theta \mbox{ constant} \} $; the set of Bishop frames for a given curve is thus a one-parameter family of frames. Then
\begin{align*}
& \mathbf{R}(s)^{\top} \mathbf{R}'(s) \\
& =
\begin{pmatrix} 
0 & - k_{1}(s) \cos \theta(s) - k_{2}(s) \sin \theta(s) & k_{1}(s) \sin \theta(s) - k_{2}(s) \cos \theta(s) \\ 
k_{1}(s) \cos \theta(s) + k_{2}(s) \sin \theta(s) & 0 & - \theta'(s) \\ 
- k_{1}(s) \sin \theta(s) + k_{2}(s) \cos \theta(s) & \theta'(s)  & 0
\end{pmatrix}.
\end{align*} 
Its axial vector is
\[ \omega(s) = \begin{pmatrix} 
\theta'(s) \\ 
k_{1}(s) \sin \theta(s) - k_{2}(s) \cos \theta(s) \\ 
k_{1}(s) \cos \theta(s) + k_{2}(s) \sin \theta(s)
\end{pmatrix}, \]
where $k_1^2+k_2^2 = |u''|^2$. Thus, the energy density in \eqref{eq:energy-rod-isotropic} can be written as
\[ q(\omega) =  E J |u''|^2 + G J_{1} (\theta')^2. \]
We remark that in this particular case of an isotropic material and equal principal moments of inertia of the cross section the specific choice of Bishop's frame is immaterial.

In the following the goal will thus be to find a suitable, simple discrete approximation for the functional
\begin{equation}
\label{eq:continuous-energy} 
\F(u,\theta) :=
\left\{ \begin{array}{cl}
\int_{0}^{L} | u''(s) |^{2} +| \theta'(s) |^{2} \ ds & \mbox{if }(u,\theta) \in W^{2,2}( (0,L) ; \R^{3} ) \times W^{1,2}((0,L)) \mbox{ and } |u'| \equiv 1, \\
\i & \mbox{else.}
\end{array} \right.
\end{equation}
We write $\F(u,\theta) = \F^\textrm{bend}(u) + \F^\textrm{tor}(\theta)$.
Note that, for the sake of simplicity, we have set $EJ=G J_1=2$.

\subsection{Discrete rods}
%\todo[inline]{check edges vs segments everywhere}
As noted in the introduction, for numerical applications we want to write a discrete version of the energy derived above. Following \cite{bergou2008discrete}, we consider a rod described by an ordered set of points in $\R^3$. A simple piecewise affine approximation of our rod is then  given by the set of points with connecting straight edges (or segments) between points adjacent in the ordering.

As in the continuum case, we can now define a natural (``Bishop'') frame to associate with the discrete, piecewise affine rod. Again, we simply fix a frame on the first point describing the rod. This frame is then associated with the first edge. By rotating the frame in the plane given by two adjacent edges by the angle between the two edges, we can propagate this frame along the rod. This procedure only runs into trouble if two consecutive edges make a 180 degree turn. As, in this work, we will exclusively be concerned with discrete curves converging to arc-length parametrized $W^{2,2}$-curves we may from now on  ignore this issue -- it arises several times hereafter, but can always  only occur finitely often in any sequence of curves.

This allows us to again define a current frame on the discrete rod. To do this, we associate an angle with each edge. This angle now describes the deviation from the aforementioned Bishop frame on each edge.

A framed discrete rod is thus a pair $ ( X , \Phi ) \in ( \R^{3} )^{N+1} \times \R^{N} $ for some $ N \in \N $.
We write $ X = ( x_{0} , \ldots , x_{N} ) $ and $ \Phi = ( \varphi_{1} , \ldots , \varphi_{N} ) $.
The $ x_{i} $ prescribe the positions of points on the rod and $ \varphi_{i} $ angles relative to the Bishop frame for an edge between $ x_{i-1}$ and $ x_{i} $. As in the continuous case, it is enough to provide the angle by which the Bishop frame must be rotated around the axis given by the edge tangent to obtain the current rod frame.

The next step is to derive a reasonable energy functional for this discrete rod. The piecewise affine rod described above makes it difficult to directly give sense to a curvature energy. We thus introduce a suitable mapping from discrete to continuous and sufficiently smooth rods. This mapping will keep the important features of the discrete Bishop frame, as described above, intact.

%%%%%%%%%%%%%%%%%%%%%%%%%%%%%%%%%%%%%%%%%%%%%%%%%%%%%%%%%%%%%%%%%%%%%%%%%%%%%%%%%%%%%%%%%%%%%%%%%%%%%%%%%%%%%%%%%%%%%%%%%%
\subsection{Cubic function determined by a triple}
\label{subsect:cubic}
%%%%%%%%%%%%%%%%%%%%%%%%%%%%%%%%%%%%%%%%%%%%%%%%%%%%%%%%%%%%%%%%%%%%%%%%%%%%%%%%%%%%%%%%%%%%%%%%%%%%%%%%%%%%%%%%%%%%%%%%%%
We start by deriving a cubic polynomial suitable for locally approximating such a discrete curve. This is a simple procedure akin to fitting a Hermite polynomial; we provide the details here for the readers' convenience. Let us thus have $ x_{-1} , x_{0} , x_{1} \in \R^{3} $ given and determine the cubic polynomial $S\colon  \tau \mapsto S(\tau) \in \R^3$ for $\tau \in\R$ that for some $ T > 0 $ fulfils,
\begin{equation*}
S(0) = \frac{1}{2} ( x_{-1} + x_{0} ), \quad
S(T) = \frac{1}{2} ( x_{0} + x_{1} )
\end{equation*}
and
\begin{equation*}
S'(0) = \sigma_{0} \frac{ x_{0} - x_{-1} }{ | x_{0} - x_{-1} | }, \quad
S'(T) = \sigma_{1} \frac{ x_{1} - x_{0} }{ | x_{1} - x_{0} | }
\end{equation*}
with $ \sigma_{0} , \sigma_{1} \in \R $ given.

We can immediately fix the constant and the linear coefficient of the cubic polynomial to obtain
\[ 
S(\tau) = A \tau^{3} + B \tau^{2} + \sigma_{0} \frac{ x_{0} - x_{-1} }{ | x_{0} - x_{-1} | } \tau + \frac{1}{2} ( x_{-1} + x_{0} ).
\]
Due to the condition at $\tau=T$, the quadratic and cubic coefficients must satisfy
\begin{eqnarray*}
%\label{eq:spline-B}
B T^{2} 
&  =  & \left( \tfrac{3}{2} - \tfrac{ \sigma_{1} T }{ | x_{1} - x_{0} | } \right) x_{1} 
		+ \left( \tfrac{ \sigma_{1} T }{ | x_{1} - x_{0} | } - \tfrac{ 2 \sigma_{0} T }{ | x_{0} - x_{-1} | } \right) x_{0} 
		+ \left( - \tfrac{3}{2} + \tfrac{ 2 \sigma_{0} T }{ | x_{0} - x_{-1} | } \right) x_{-1}, \\
%\label{eq:spline-A}
A T^{3} 
&  =  & \left( -1 + \tfrac{ \sigma_{1} T }{ | x_{1} - x_{0} | } \right) x_{1} 
		+ \left( - \tfrac{ \sigma_{1} T }{ | x_{1} - x_{0} | } + \tfrac{ \sigma_{0} T }{ | x_{0} - x_{-1} | } \right) x_{0} 
		+ \left( 1 - \tfrac{ \sigma_{0} T }{ | x_{0} - x_{-1} | } \right) x_{-1}.
\end{eqnarray*}
If 
\begin{equation}
\label{eq:spline-1}
| x_{1} - x_{0} | = | x_{0} - x_{-1} | = r,
\end{equation} 
we obtain
\begin{eqnarray*}
B T^{2} 
&  =  & \left( \tfrac{3}{2} - \tfrac{ \sigma_{1} T }{ r } \right) x_{1} 
		+ \left( \tfrac{ \sigma_{1} T }{ r } - \tfrac{ 2 \sigma_{0} T }{ r } \right) x_{0} 
		+ \left( - \tfrac{3}{2} + \tfrac{ 2 \sigma_{0} T }{ r } \right) x_{-1}, \\
A T^{3} 
&  =  & \left( -1 + \tfrac{ \sigma_{1} T }{ r } \right) x_{1} 
		+ \left( - \tfrac{ \sigma_{1} T }{ r } + \tfrac{ \sigma_{0} T }{ r } \right) x_{0} 
		+ \left( 1 - \tfrac{ \sigma_{0} T }{ r } \right) x_{-1}.
\end{eqnarray*}
If moreover 
\begin{equation}
\label{eq:spline-2}
\sigma_{0} = \sigma_{1} = \sigma,
\end{equation} 
then
\begin{eqnarray*}
B T^{2} 
&  =  & \frac{ x_{1} - 2 x_{0} + x_{-1} }{2}
		+ \left( 1 - \frac{ \sigma T }{ r } \right)(  x_{1} + x_{0} - 2 x_{-1} ), \\
A T^{3} 
&  =  & \left( -1 + \frac{ \sigma T }{ r } \right) ( x_{1} - x_{-1} ).
\end{eqnarray*}
If -- even further -- it holds that
\begin{equation} 
\label{eq:spline-3}
r = \sigma T, 
\end{equation}
then  $S$ is a quadratic function and 
\begin{equation*}
B T^{2} 
=  \frac{ x_{1} - 2 x_{0} + x_{-1} }{2}.
\end{equation*}

%%%%%%%%%%%%%%%%%%%%%%%%%%%%%%%%%%%%%%%%%%%%%%%%%%%%%%%%%%%%%%%%%%%%%%%%%%%%%%%%%%%%%%%%%%%%%%%%%%%%%%%%%%%%%%%%%%%%%%%%%%
\subsection{Assigning a framed spline to a set of points and angles}
\label{subsect:assign-spline}
%%%%%%%%%%%%%%%%%%%%%%%%%%%%%%%%%%%%%%%%%%%%%%%%%%%%%%%%%%%%%%%%%%%%%%%%%%%%%%%%%%%%%%%%%%%%%%%%%%%%%%%%%%%%%%%%%%%%%%%%%%

Based on the local cubic spline curve developed above, we can now derive a suitable smooth spline interpolation for our discrete curve. For any ordered set of points 
$ X = ( x_{0} , \ldots , x_{N} ) \in ( \R^{3} )^{N+1} $
we denote by $ \ell(X) $ the length of the polygonal line through these points, i.e.,
\begin{equation*}
\ell(X) := | x_{1} - x_{0} | + \ldots + | x_{N} - x_{N-1} |.
\end{equation*}
Moreover, let 
\begin{equation*}
\lambda(X) := \frac{ \ell(X) }{L},
\end{equation*}
%where $L$ is the given length of the entire curve.
with $L$ fixed.
We now assign to $X$ a $ {\S}^{3,1} $-spline $ y^{X}: [0,L] \to \R^{3} $ that is an element of $ W^{2,2}( (0,L) ; \R^{3} ) $. First we introduce a partition of $ [0 , \ell(X) ] $ by setting $ \tau_{0} := 0 $, $ \tau_{N+1} := \ell(X) $ and for every $ i = 1 , \ldots , N $ 
\begin{equation} 
\label{eq:def-tau}
\tau_{i} :=  \sum_{k=1}^{i} | x_{k} - x_{k-1} | - \frac{1}{2}| x_{i} - x_{i-1} |.
\end{equation}
We define $ \eta^{X} : [0, \ell(X) ] \to \R^{3} $ by
\begin{equation} 
\label{eq:def-eta}
\eta^{X}( \tau ) 
:= \left\{ \begin{array}{cl}
x_{0} + \tau \frac{ x_{1} - x_{0} }{ | x_{1} - x_{0} | } , & \tau \le \tau_{1}, \\
S_{i}( \tau - \tau_{i} ), & \tau_{i} \le \tau \le \tau_{i+1} \mbox{ for some } i \in \{ 1 , \ldots, N-1 \}, \\
x_{N} - ( \ell(X) - \tau ) \frac{ x_{N} - x_{N-1} }{ | x_{N} - x_{N-1} | }, & \tau_{N} \le \tau .
\end{array} \right.
\end{equation}
$ S_{i} : [0, \tau_{i+1}-\tau_{i}] \to \R^{3} $ is the only cubic polynomial that satisfies:
\[ S_{i}(0) = \overline{ x_{i} }, \quad
S_{i}(\tau_{i+1}-\tau_{i}) = \overline{ x_{i+1} }, \quad
S_{i}'(0) = \frac{ x_{i} - x_{i-1} }{ | x_{i} - x_{i-1} | }, \quad
S_{i}'(\tau_{i+1}-\tau_{i}) =\frac{ x_{i+1} - x_{i} }{ | x_{i+1} - x_{i} | } \]
with $ \overline{ x_{i} } := \frac{1}{2}( x_{i-1} + x_{i} ) $ which we determined in section~\ref{subsect:cubic}.
Then we simply rescale
\begin{equation*}
y^{X}: [0,L] \to \R^{3}, \quad
y^{X}(t) := \eta^{X}( \lambda(X) t ). 
\end{equation*}
For the shape of this curve, see Figure~\ref{figure:spline}. We identify elements from $ (\R^{3} )^{N+1} $ with functions from $ W^{2,2}( (0,L) ; \R^{3} ) $ via $ X \equiv y^{X} $.

The function $ y^{X} $ is piecewise cubic. Denote by $ ( t_{0} , \ldots , t_{N+1} ) $ the partition  of $ [0,L] $ where 
$ t_{i} := \frac{\tau_{i} }{ \lambda(X) } $ 
corresponds to the distances between the consecutive points in $X$. \EEE
At the interior knots, i.e., for $ i \in \{ 1 , \ldots , N \} $, it holds 
\begin{equation}
\label{eq:properties-spline-knots}
y^{X}( t_{i} ) = \overline{ x_{i} }
\quad \mbox{and} \quad
(y^{X})'(t_{i} ) =\lambda(X) \frac{ x_{i} - x_{i-1} }{ | x_{i} - x_{i-1} | }
\end{equation}
whereas the first and the last part are linear with
\begin{equation}
\label{eq:properties-spline-linear}
(y^{X})'(t) =\lambda(X) \frac{ x_{1} - x_{0} }{ | x_{1} - x_{0} | } \quad \mbox{for $ t \le t_{1} $}
%\quad \mbox{and} \quad
%(y^{X})'(t) =\lambda(X) \frac{ x_{N} - x_{N-1} }{ | x_{N} - x_{N-1} | } \mbox{ for $ t \ge t_{N} $ }.
\end{equation}
and analogously for $ t \ge t_{N} $.

\begin{figure}[ht]
\includegraphics[width=.5\textwidth]{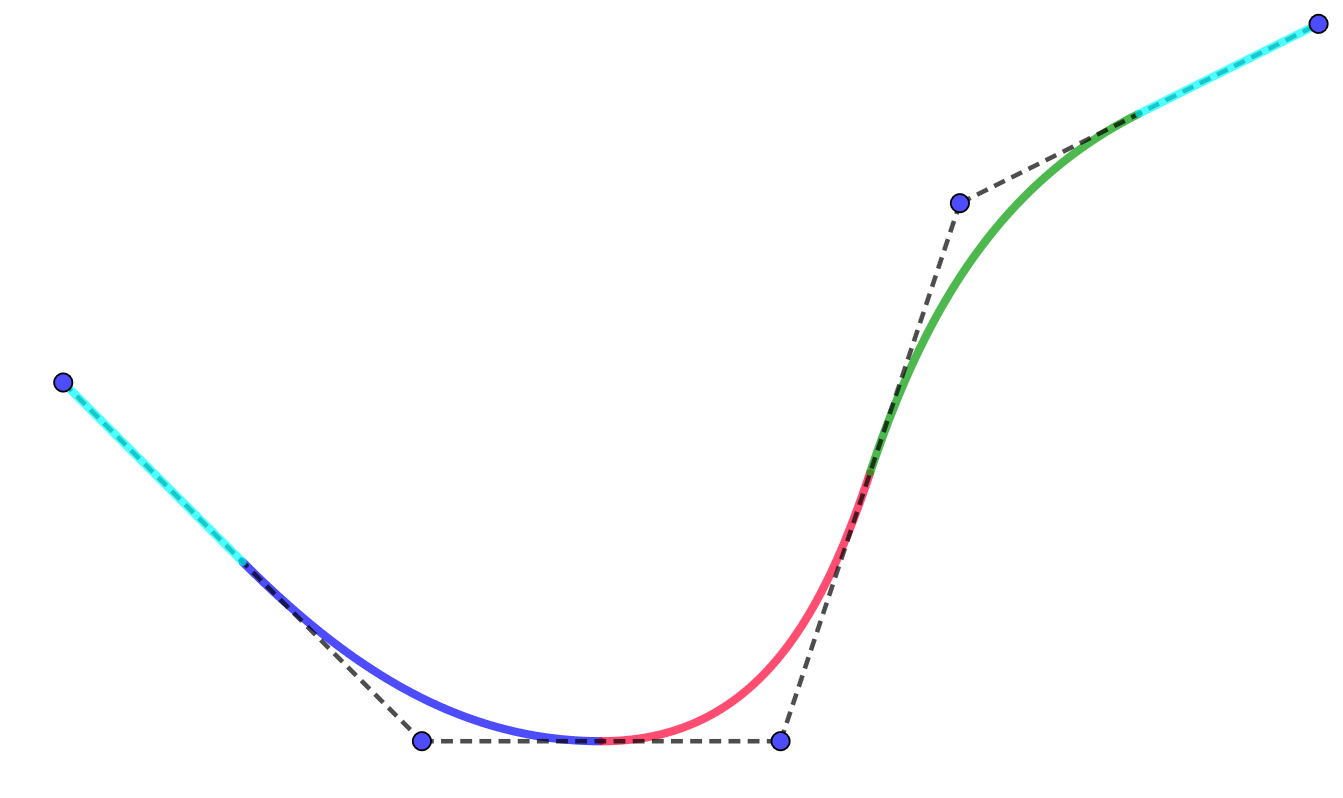}	
\caption{Assigning a spline to a set of points. This figure shows the shape of the curve that we construct.
An ordered set of points $X$ induces a polygonal line. We connect the midpoints of the adjacent segments by cubic functions such that the derivative is continuous and add straight parts on both sides.}
\label{figure:spline}
\end{figure}

We now turn to the construction of an appropriate frame for the discrete spline defined above. As in section \ref{sec:bishp}, once a Bishop frame has been obtained, we may simply define the frame of our given curve as this Bishop frame rotated on each point by a given angle. The task of finding a Bishop frame for our spline curves is deferred to section \ref{sec:bishop_nonarc}.

A framed discrete rod is a pair $ ( X , \Phi ) \in ( \R^{3} )^{N+1} \times \R^{N} $ for some $ N \in \N $.
In order to assign a framed spline to it, we must introduce also a suitable angle function corresponding to $ \Phi $.
Here we simply choose a piecewise linear function. 
More precisely, we define $ \zeta^{X, \Phi} : [0, \ell(X) ] \to \R $ by
\begin{equation*} 
%\label{eq:def-zeta}
\zeta^{X, \Phi}( \tau ) 
:= \left\{ \begin{array}{cl}
\varphi_{1} , & \tau \le \tau_{1}, \\
\varphi_{i} + \frac{ \varphi_{i+1} - \varphi_{i} }{ \tau_{i+1} - \tau_{i} }  ( \tau - \tau_{i} ), & \tau_{i} \le \tau \le \tau_{i+1} \mbox{ for some } i \in \{ 1 , \ldots, N-1 \}, \\
\varphi_{N}, & \tau_{N} \le \tau .
\end{array} \right.
\end{equation*}
Subsequently, we rescale as above and get
\begin{equation*}
z^{X,\Phi}: [0,L] \to \R, \quad
z^{X,\Phi}(t) := \zeta^{X,\Phi}( \lambda(X) t ). 
\end{equation*}
Hence, we identify a framed discrete rod $ ( X , \Phi ) \in ( \R^{3} )^{N+1} \times \R^{N} $ with a pair of functions
$ ( y^{X} , z^{X, \Phi} ) \in W^{2,2}( (0,L) ; \R^{3} ) \times W^{1,2}((0,L)) $.

\subsection{Bishop frame for non arc-length parametrized curves} \label{sec:bishop_nonarc}

Let us determine the differential equations for a Bishop frame if a curve $ y : [0,L] \to \R^{3} $ is not arc-length parametrized, which is in particular the case for the splines described above. We denote the given parameter by $t$ and the natural by $s$. Let us write
\[ y(t) = u(s(t)) \quad \mbox{and} \quad u(s) = y(t(s)), \]
where we as usual do not use special symbols for the function $ t \mapsto s(t) = \int_{0}^{t} | y'(t) | \ dt $ and its inverse $ s \mapsto t(s) $. Note that $ s'(t) = | y'(t) | $ and $ t'(s) = | y'(t(s)) |^{-1} $.
Then 
\[ u'(s) = \frac{ y'(t(s)) }{ | y'(t(s)) | } \]
and 

\begin{equation*}
u''(s) 
= \frac{ \frac{ y''(t(s)) }{ | y'(t(s)) | } | y'(t(s)) | 
- y'(t(s)) \left( \frac{ y'(t(s)) }{ | y'(t(s)) | } \cdot \frac{ y''(t(s)) }{ | y'(t(s)) | } \right)  }
{ | y'(t(s)) |^{2} }.
\end{equation*} 
Hence,
\begin{equation*}
u'(s(t)) = \frac{ y'(t) }{ | y'(t) | } 
\quad \mbox{and} \quad
u''(s(t)) 
= \frac{ y''(t) 
-  \left( \frac{ y'(t) }{ | y'(t) | } \cdot y''(t) \right) \frac{ y'(t) }{ | y'(t) | } }
{ | y'(t) |^{2} } 
\end{equation*} 
and
\begin{equation*}
u'(s(t)) \times u''(s(t)) 
= \frac{ y'(t) }{ | y'(t) | } \times \frac{ y''(t) 
-  \left( \frac{ y'(t) }{ | y'(t) | } \cdot y''(t) \right) \frac{ y'(t) }{ | y'(t) | } }
{ | y'(t) |^{2} } 
= \frac{ y'(t) \times y''(t) }{ | y'(t) |^{3} }. 
\end{equation*} 
How can we obtain a Bishop frame $ t \mapsto \mathbf{B}(s(t)) =: ( \beta_{1}(t) , \beta_{2}(t) , \beta_{3}(t) ) \in {\rm SO}(3) $
directly? Clearly,
\[ \beta_{1}(t) = \frac{ y'(t) }{ | y'(t) | }. \]
For $ i=2,3 $ 
we may rewrite the system
\[ b_{i}'(s) = ( u'(s) \times u''(s) ) \times b_{i}(s) \]
as
\[ \beta_{i}'(t) 
= b_{i}'(s(t)) | y'(t) |
= \frac{ y'(t) \times y''(t) }{ | y'(t) |^{2} } \times \beta_{i}(t). \]
(If we use only one parametrization in the following, we simply use $ b_{i} $ instead of $ \beta_{i} $.)
We note that the Bishop frame for our piecewise polynomial curves is somewhat special. Since those splines are piecewise planar, it agrees on each planar section with the Frenet frame modulo a constant rotation. 

More importantly, we note that for a converging sequence of curves we may coherently choose Bishop frames so that they converge as well.
\begin{lemma} \label{lem:frame_conv}
Suppose $ y_{N} \weakly y $ in $ W^{2,2}( (0,L) ; \R^{3} ) $ with $ | y' | \ge c > 0 $ a.e.
Then there exist Bishop frames $ \mathbf{B}^{N} $ and $ \mathbf{B} $ on $ y_{N} $ and $y$, resp., such that
$ \mathbf{B}^{N} \weakly \mathbf{B} $ in $ W^{1,2} $ (and therefore strongly in $ C^{0,\alpha} $ for $ \alpha < \frac{1}{2} $).
\end{lemma}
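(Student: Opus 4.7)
The plan is to build both frames as solutions of the Bishop ODE system derived earlier in this subsection, making a coherent choice of initial data. By the one-dimensional Sobolev embedding $W^{2,2}(0,L)\hookrightarrow C^{1,1/2}$ together with Rellich–Kondrachov, the weak convergence $y_N\weakly y$ in $W^{2,2}$ upgrades to strong convergence $y_N\to y$ in $C^1$, while $y_N''\weakly y''$ weakly in $L^2$. Since $|y'|$ is continuous and $|y'|\geq c$ a.e., in fact $|y'|\geq c$ everywhere, hence $|y_N'|\geq c/2$ uniformly for $N$ large. Therefore the tangent fields
\[
\beta_1^N(t):=\frac{y_N'(t)}{|y_N'(t)|}, \qquad \beta_1(t):=\frac{y'(t)}{|y'(t)|}
\]
are well defined, of class $W^{1,2}$, and satisfy $\beta_1^N\to\beta_1$ uniformly.

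For the remaining two vectors I fix any orthonormal basis $(\beta_2(0),\beta_3(0))$ of $\{\beta_1(0)\}^\perp$, and for each $N$ choose $(\beta_2^N(0),\beta_3^N(0))$ orthonormal in $\{\beta_1^N(0)\}^\perp$ with $\beta_i^N(0)\to\beta_i(0)$; this is achieved by projecting $\beta_i(0)$ onto $\{\beta_1^N(0)\}^\perp$ and Gram–Schmidt orthonormalising, which converges to the identity since $\beta_1^N(0)\to\beta_1(0)$. Writing $\Omega_N(t):=\frac{y_N'(t)\times y_N''(t)}{|y_N'(t)|^2}$ and analogously $\Omega(t)$, I then solve the linear Carathéodory systems
\[
\beta_i^{N\prime}(t)=\Omega_N(t)\times\beta_i^N(t),\qquad i=2,3,
\]
with the chosen initial data, and similarly for $\beta_i$. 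Since the right-hand side is the cross product with a vector, the instantaneous generator is skew-symmetric and preserves the Gram matrix; hence $(\beta_1^N,\beta_2^N,\beta_3^N)\in\SO(3)$ for all $t$, and analogously for the limit. Thus $\mathbf{B}^N,\mathbf{B}$ are genuine Bishop frames.

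It remains to pass to the limit. Because $|y_N'|^{-2}\to|y'|^{-2}$ and $y_N'\to y'$ uniformly while $y_N''\weakly y''$ in $L^2$, the product rule for strong $\times$ weak convergence yields $\Omega_N\weakly\Omega$ in $L^2$. The bound $\|\beta_i^{N\prime}\|_{L^2}\leq\|\Omega_N\|_{L^2}$ together with $|\beta_i^N|\equiv 1$ shows that $\mathbf{B}^N$ is bounded in $W^{1,2}$, so along a subsequence $\mathbf{B}^N\weakly\tilde{\mathbf{B}}$ in $W^{1,2}$ and strongly in $C^{0,\alpha}$. Writing the ODE in integral form
\[
\beta_i^N(t)=\beta_i^N(0)+\int_0^t \Omega_N(s)\times\beta_i^N(s)\,ds,
\]
the integrand is a product of an $L^2$-weakly convergent factor $\Omega_N$ with a uniformly (hence $L^\infty$-strongly) convergent factor $\beta_i^N$, so it converges weakly in $L^2$ and the integral passes to the limit pointwise in $t$. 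Thus $\tilde{\mathbf{B}}$ solves the Bishop ODE for $y$ with the prescribed initial data; by uniqueness $\tilde{\mathbf{B}}=\mathbf{B}$, and the usual subsequence-of-subsequence argument gives convergence of the full sequence.

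The main obstacle I anticipate is exactly the passage to the limit in the bilinear expression $\Omega_N\times\beta_i^N$: the factor $\Omega_N$ only converges weakly in $L^2$ because $y_N''$ does, so one cannot simply substitute. The resolution is to exploit the $W^{1,2}$ bound on the frame to upgrade $\beta_i^N$ to strong convergence in $C^{0,\alpha}$ via Rellich, turning weak $\times$ weak into the admissible weak $\times$ strong pairing. Everything else (existence in the Carathéodory class, preservation of orthonormality by the skew generator, and uniqueness of the linear ODE) is standard.
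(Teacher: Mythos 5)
Your proof is correct, but the passage to the limit runs along a genuinely different track than the paper's. Both arguments share the same scaffolding: the frames are built by solving $\beta_i' = \Omega_N \times \beta_i$ with $\Omega_N = \frac{y_N' \times y_N''}{|y_N'|^2}$, the initial data are chosen coherently so that $\beta_i^N(0) \to \beta_i(0)$, and everything rests on the observation that $\Omega_N \weakly \Omega$ in $L^2$ (uniform-times-weak products pass to the limit). From there the paper proceeds quantitatively, adapting Hornung's Lemma~3.4: setting $\delta_i^N = b_i^N - b_i$ and $\Phi_i^N(t) = \int_0^t |\delta_i^N|^2\,d\tau$, it derives a Gr\"onwall inequality whose inhomogeneity $\Psi_i^N(t) = \bigl|\int_0^t (\omega^N - \omega)\times b_i \,d\tau\bigr|^2$ tends to $0$ pointwise by weak convergence, and concludes via dominated convergence that $b_i^N \to b_i$ strongly in $L^2$ for the \emph{full} sequence, with the explicit bound $\Phi_i^N(t) \le C\bigl(\int_0^L \Psi_i^N \,d\tau + |\delta_i^N(0)|^2\bigr)$; weak $W^{1,2}$ convergence then follows from the derivative bound $|(b_i^N)'| \le |\omega^N|$. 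You instead argue softly: extract a subsequence converging weakly in $W^{1,2}$ and strongly in $C^{0,\alpha}$, pass to the limit in the integral form of the ODE via the weak-times-uniform pairing, identify the limit through uniqueness for the linear Carath\'eodory system, and recover the full sequence by the subsequence-of-subsequences principle. Your route trades the explicit estimate for compactness plus uniqueness — it is the standard ODE-stability argument and avoids any Gr\"onwall computation; the paper's route is quantitative (it yields a modulus of convergence that could serve for error estimates) and needs no subsequence extraction at all. One small point you should make explicit: the first column $\beta_1^N = y_N'/|y_N'|$ satisfies the \emph{same} equation $(\beta_1^N)' = \Omega_N \times \beta_1^N$ — this is what licenses your claim that the skew generator preserves the Gram matrix of the whole triple and hence that $\mathbf{B}^N(t) \in \SO(3)$ for all $t$; preservation of orthonormality of $(\beta_2^N,\beta_3^N)$ alone would not keep them orthogonal to the prescribed tangent.
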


\begin{proof} We adapt the proof of \cite[Lemma 3.4]{hornung2020deformation}.
Let $ 0 \le \alpha < \frac{1}{2} $ be arbitrary.
Regarding the first columns, since $ y_{N} \weakly y $ in $ W^{2,2} $, also $ y_{N} \to y $ in $ C^{1,\alpha} $ and $ y_{N}' \to y' $ in $ C^{0,\alpha} $.
Hence $ b^{N}_{1} = \frac{ y_{N}' }{ | y_{N}' | } $ converges to $ b_{1} = \frac{ y' }{ | y' | } $ weakly in $ W^{1,2} $ and strongly in $ C^{0,\alpha} $. 
The second and third columns must solve the equations
\begin{equation*}
    ( b_{i}^{N} )' = \omega^{N} \times b_{i}^{N}
    \quad \mbox{and} \quad
    b_{i}' = \omega \times b_{i}
\end{equation*} 
with 
\begin{equation*}
    \omega^{N} = \frac{ y_{N}' \times y_{N}'' }{ | y_{N}' |^{2} }
    \quad \mbox{and} \quad
    \omega = \frac{ y' \times y'' }{ | y' |^{2} }.
\end{equation*} 
Since $ b_{1}^{N}(0) \to b_{1}(0) $, we may choose the initial conditions is such way that $ b_{i}^{N}(0) \to b_{i}(0) $ also for $ i = 2,3 $. Thus we defined $ W^{1,2}$-frames $ \mathbf{B}^{N} $ and $ \mathbf{B} $.

Let $ i \in \{2,3\} $ and let us denote $ \delta_{i}^{N} = b_{i}^{N} - b_{i} $. Then
\begin{equation*}
    ( \delta_{i}^{N} )' = ( b_{i}^{N} - b_{i} )' 
    = \omega^N \times ( b_{i}^{N} - b_{i} ) + ( \omega^{N} - \omega ) \times b_{i}
    = \omega^N \times \delta_{i}^{N} + ( \omega^{N} - \omega ) \times b_{i}. 
\end{equation*} 
Set
\begin{equation*}
    \Phi_{i}^{N}(t) = \int_{0}^{t} | \delta_{i}^{N}(\tau) |^2 \ d \tau
    \quad \mbox{and} \quad
    \Psi_{i}^{N}(t) = \left| \int_{0}^{t} ( \omega^{N}( \tau)  - \omega(\tau) ) \times b_{i}( \tau) \ d \tau \right|^2.
\end{equation*} 
Then
\begin{align*}
    ( \Phi_{i}^{N} )'(t) 
    & = | \delta_{i}^{N}(t) |^2 \\
    & = \left| \int_{0}^{t} ( \delta_{i}^{N} )'(\tau) \ d \tau + \delta_{i}^{N}(0) \right|^2 \\
    & \le 2 \left| \int_{0}^{t} \Big( \omega^N (\tau) \times \delta_{i}^{N} (\tau)  + ( \omega^{N}( \tau)  - \omega(\tau) ) \times b_{i}( \tau) \Big) d \tau \right|^2 + 2 | \delta_{i}^{N}(0) |^2 \\
    & \le 4 \left| \int_{0}^{t} \omega^N (\tau) \times \delta_{i}^{N} (\tau) \ d \tau \right|^2
    + 4 \left| \int_{0}^{t}  ( \omega^{N}( \tau)  - \omega(\tau) ) \times b_{i}( \tau) \ d \tau \right|^2 
    + 2 | \delta_{i}^{N}(0) |^2 \\
    & \le 4 \| \omega^{N} \|_{L^2}^2 \cdot \Phi_{i}^{N}(t)
    + 4 \Psi_{i}^{N}(t) + 2 | \delta_{i}^{N}(0) |^2.
\end{align*} 
By Gröwall's inequality, from
\begin{equation*}
    ( \Phi_{i}^{N} )'(t)
    \le a \Phi_{i}^{N}(t) + b ( \Psi_{i}^{N}(t) + | \delta_{i}^{N}(0) |^2 )
\end{equation*}
it follows
\begin{equation*}
    \Phi_{i}^{N}(t)
    \le e^{ a t } \Phi_{i}^{N}(0) + \int_{0}^{t} b e^{ a ( t - \tau ) } ( \Psi_{i}^{N}( \tau ) + | \delta_{i}^{N}(0) |^2 ) \ d \tau
    \le C \left( \int_{0}^{L} \Psi_{i}^{N}( \tau ) \ d \tau + | \delta_{i}^{N}(0) |^{2} \right),
\end{equation*}
employing that $ \Phi_{i}^{N}(0) = 0 $ by the definition. 
Since $ \omega^{N} \weakly \omega $ in $ L^2 $, for every $ t \in [0,L] $ we have
\begin{equation*}
    \lim_{ N \to \i } \Psi_{i}^{N}(t) = 0
    \quad \mbox{and} \quad
    \Psi_{i}^{N}(t) \le \| \omega^{N}  - \omega \|_{ L^2 }^{2} \le C \ \mbox{for all $N$}.
\end{equation*}
Consequently, we may apply the dominated convergence theorem, yielding  
\begin{equation*}
    \lim_{ N \to \i } \int_{0}^{L} \Psi_{i}^{N}( \tau ) \ d \tau = 0.
\end{equation*}
Since by our choice of initial conditions $ \delta_{i}^{N}(0) \to 0 $, we have uniform convergence $ \Phi_{i}^{N} \to 0 $ as $ N \to \i $. 
Hence, $ b_{i}^{N} \to b_{i} $ in $ L^{2} $.
Moreover, from $ | ( b_{i}^{N} )' | \le | \omega^{N} | $, 
it follows that $ \| ( b_{i}^{N} )' \|_{L^2} \le \| \omega^{N} \|_{L^2} \le C $.
Consequenly, $ b_{i}^{N} \weakly b_{i} $ in $ W^{1,2} $, and, finally,  $ \mathbf{B}^{N} \weakly \mathbf{B} $ in $ W^{1,2} $.
\end{proof}

%\todo[inline]{here, define FRAME for discrete curve.}

%%%%%%%%%%%%%%%%%%%%%%%%%%%%%%%%%%%%%%%%%%%%%%%%%%%%%%%%%%%%%%%%%%%%%%%%%%%%%%%%%%%%%%%%%%%%%%%%%%%%%%%%%%%%%%%%%%%%%%%%%%
\subsection{The discrete energy and our main result}
%%%%%%%%%%%%%%%%%%%%%%%%%%%%%%%%%%%%%%%%%%%%%%%%%%%%%%%%%%%%%%%%%%%%%%%%%%%%%%%%%%%%%%%%%%%%%%%%%%%%%%%%%%%%%%%%%%%%%%%%%%
%\todo[inline]{mj: This is now really the main issue, I guess. Of course, the definition will be 
%The question is also if we want to take the energy of the rescaled spline or of spline with its length. (Just a factor.)}

%\todo[inline]{reformulate for MM-Energy in terms of discrete frame.\\
%keep special cases of kirchhoff\\
%keep remark about 3-point energies.}
The energy of a framed discrete rod $ (X,\Phi) \in ( \R^{3} )^{N+1} \times \R^{N} $ also consists of bending and twisting energy. A-priori, there is no control over the length of this discrete rod. Since in some suitable limit  we want to approximate an arc-length parametrized curve of length $L$, we must introduce a penalty for the deviation of the discrete rod from arc-length parametrization.
%We must, however, introduce a penalty term, since we have no control of the length of the rod, and therefore, it may significantly differ from $L$. %\todo{You know what I mean, although it makes no sense. -- yes, i need to think of a better way to say this. we need to put this later, i think.}
Therefore, let
\begin{equation}
\label{eq:discrete-energy}
\F_{N}^{\rm disc}(X,\Phi)
:= \F_{N}^{\rm bend}(X) + \F_{N}^{\rm tor}(X,\Phi) + \F_{N}^{\rm pen}(X). 
\end{equation} 
Since we identified a framed discrete rod with a pair of sufficiently regular functions, 
we may use the same energies as in the continuous case, i.e.,
\begin{equation}
\label{eq:discrete-energy-bending}
\F_{N}^{\rm bend}(X) := \F^{\rm bend}( y^{X} ) = \int_{0}^{L} | ( y^{X} )''(t) |^{2} \ dt 
\end{equation} 
and
\begin{equation*}
%\label{eq:discrete-energy-torsion}
\F_{N}^{\rm tor}(X,\Phi) := \F^{\rm tor}( z^{X,\Phi} ) = \int_{0}^{L} | ( z^{X,\Phi} )'(t) |^{2} \ dt.
\end{equation*} 
%
%\todo[inline]{The following is a very simply choice that works. 
%I don't know if we want to be more general. 
%Our recovery sequence suffices $ 1 - \lambda(X_{N}) \sim \frac{1}{ N^{2} } $ and $ r_{N} \sim \frac{1}{N}.$ -- looks good PWD}
%
For the penalty term we choose 
\begin{equation}
\label{eq:penalty-term}
\F_{N}^{\rm pen}(X) = N^{\alpha} | \lambda(X) - 1 | + N^{\beta} \max_{i} | x_{i} - x_{i-1} |
\end{equation}
for some $ \alpha \in (0,2) $ and $ \beta \in (0,1) $.

\begin{remark}
As mentioned above, the penalty term will ensure that we approximate an arc-length parametrized curve. The choice and its role will become clear in the proofs of the $\liminf$-inequality and of the existence of a recovery sequence, see Remark~\ref{remark:penalty}, Example~\ref{ex:spacing} and the conclusion in Subsection~\ref{subsect:conclusion}.

We note that one may also use a hard penalty of the form
\[
\F_{N}^{\rm pen}(X) = \begin{cases}0 & \text{if $| \lambda(X) - 1 | < N^{-\alpha}$ and $\max_{i} | x_{i} - x_{i-1} | \le N^{-\beta}$ }, \\
+\infty & \text{otherwise,}\end{cases}
\]
for $\alpha$, $\beta$ as above. In a gradient-flow algorithm, this may be enforced for example by restricting change of the node distances, similar as, e.g., done in \cite{bartels2013simple} when the initial discrete curve has approximately the correct length.
\end{remark}

Now we state the main results of this work, i.e., compactness and $ \Gamma $-convergence of the discrete energies.
We must first have functionals defined on the same space.
For that purpose, let us for every $ X \in ( \R^{3} )^{N+1} $ and $ \Phi \in \R^{N} $ define
\begin{equation}
\label{eq:def-F-N}
    \F_{N}(y^{X},z^{X, \Phi}) := \F^{\rm disc}_{N}( X, \Phi ),
\end{equation} 
and extend the definition to the whole $ L^{2}((0,L);\R^{3}) \times L^{2}((0,L)) $ by $ \i $.
(Since $ \F^{\rm disc}_{N} $ depends only upon the assigned functions, the functional $ \F_{N} $ is well-defined.)

Trivially the following holds.
\begin{proposition}[Equicoercitivity]
%\label{theo:comp}
Let $ \F_{N}: L^{2}((0,L);\R^{3}) \times L^{2}((0,L)) \to [0,\i] $ be defined as in \eqref{eq:def-F-N}.
Then for any sequence $ (y_{N},z_{N}) \in W^{2,2}((0,L);\R^{3}) \times W^{1,2}((0,L))$
\[ \F_{N}(y_{N},z_{N}) \ge  \| y_{N}'' \|_{L^{2}}^{2} +  \| z_{N}' \|_{L^{2}}^{2}. \]
\end{proposition}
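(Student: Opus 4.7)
The plan is simply to unpack the definitions and observe that the bending and torsion pieces of $\F_N^{\rm disc}$ are defined to equal exactly the $L^2$-norms of $y_N''$ and $z_N'$ whenever $(y_N,z_N)$ comes from a discrete rod, and that the penalty term contributes only a non-negative extra quantity. Everything else is taken care of by the extension-by-$+\infty$ convention.

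Concretely, I would split into two cases. In the first case, $(y_N,z_N)$ is not of the form $(y^X,z^{X,\Phi})$ for any discrete rod $(X,\Phi) \in (\R^3)^{N+1} \times \R^N$; then by the definition just below \eqref{eq:def-F-N} we have $\F_N(y_N,z_N) = \infty$, and the claimed inequality is vacuous. In the second case, there exist $X$ and $\Phi$ with $y_N = y^X$ and $z_N = z^{X,\Phi}$; then by \eqref{eq:def-F-N}, \eqref{eq:discrete-energy} and \eqref{eq:discrete-energy-bending} together with the analogous identity for the torsion term, one has
\[
\F_N(y_N,z_N) = \int_0^L |y_N''(t)|^2\,dt + \int_0^L |z_N'(t)|^2\,dt + \F_N^{\rm pen}(X).
\]
Since $\F_N^{\rm pen}(X) \ge 0$ by inspection of \eqref{eq:penalty-term}, discarding this non-negative term yields the desired bound.

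There is no real obstacle here; the statement is structural. The only small point worth confirming is that the map $(X,\Phi) \mapsto (y^X,z^{X,\Phi})$ is well-defined enough for $\F_N$ to make sense, which is precisely what is noted in the parenthetical remark after \eqref{eq:def-F-N}: the discrete energy depends only on the assigned smooth functions, so the value of $\F_N$ at $(y^X,z^{X,\Phi})$ is unambiguous even if several $(X,\Phi)$ happen to produce the same pair of functions.
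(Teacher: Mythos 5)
Your proof is correct and is exactly the argument the paper has in mind: the paper states this proposition without proof, calling it trivial, precisely because by \eqref{eq:def-F-N}, \eqref{eq:discrete-energy} and \eqref{eq:discrete-energy-bending} the functional either equals $+\infty$ or equals the two $L^2$-norms plus the non-negative penalty \eqref{eq:penalty-term}. Your case split and the remark on well-definedness of $\F_N$ via the parenthetical after \eqref{eq:def-F-N} are just a careful write-up of that same structural observation.
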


The above noted equicoercitivity, together with Lemma \ref{lem:frame_conv}, immediately implies the following.
\begin{corollary}[Convergence of frames]
\label{theo:frames}
Suppose that  
\[ \F_N(y_N,z_N) \le C
\quad \mbox{and} \quad
(y_N,z_N) \to (y,z) \quad \mbox{in } L^{2}((0,L);\R^{3}) \times L^{2}((0,L)). \]
Let us choose any frames $ \mathbf{R}_{N} , \mathbf{R} \in W^{1,2}((0,L);\SO(3)) $ that are coherent with $ (y_N,z_N) $ and $ (y,z) $, respectively, i.e., some Bishop frames determined by $y_{N}$ and $y$ rotated around the tangent by the angles $z_{N}$ and $z$, respectively, see \eqref{eq:arbitrary-frame}. 

Then, there exist $\bar\theta_N \in \R$ so that  $ \mathbf{R}_{N} \mathbf{\Theta}^{\bar\theta_N} \to \mathbf{R} $ in $ C^{0,\alpha} $.
In other words, the frames converge in $ C^{0,\alpha} $ modulo rotation by constant angles.
\end{corollary}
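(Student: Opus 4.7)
The plan is to turn the energy bound into weak convergence in the correct Sobolev spaces, apply Lemma~\ref{lem:frame_conv} to obtain one distinguished pair of converging Bishop frames, and then reconcile this with the arbitrary coherent choices in the statement by exploiting the one-parameter non-uniqueness of Bishop frames.

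First, I would combine the equicoercivity bound $\F_N(y_N,z_N)\ge\|y_N''\|_{L^2}^{2}+\|z_N'\|_{L^2}^{2}$ with the hypothesised $L^2$-convergence of $(y_N,z_N)$ to $(y,z)$. A standard subsequence-plus-uniqueness argument promotes this to $y_N\weakly y$ in $W^{2,2}((0,L);\R^3)$ and $z_N\weakly z$ in $W^{1,2}((0,L))$; the compact embeddings $W^{2,2}\hookrightarrow C^{1,\alpha}$ and $W^{1,2}\hookrightarrow C^{0,\alpha}$ then yield $y_N\to y$ in $C^{1,\alpha}$ and $z_N\to z$ in $C^{0,\alpha}$.

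Second, I would verify that the hypothesis $|y'|\ge c>0$ of Lemma~\ref{lem:frame_conv} holds for the limit. The penalty bound $\F_N^{\rm pen}(X_N)\le C$ forces $|\lambda(X_N)-1|\le CN^{-\alpha}\to 0$ and $\max_i|x_i-x_{i-1}|\le CN^{-\beta}\to 0$. In view of \eqref{eq:properties-spline-knots} and \eqref{eq:properties-spline-linear}, $|(y^{X_N})'|=\lambda(X_N)$ at every knot $t_i^N$ and on the two linear end pieces. The $W^{2,2}$-bound on $y_N$ gives $y_N'$ a uniform Hölder bound, and since consecutive knots are at distance $\le CN^{-\beta}$, one concludes $|(y^{X_N})'|\to 1$ uniformly; in particular $|y'|\equiv 1$. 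Lemma~\ref{lem:frame_conv} then supplies Bishop frames $\tilde{\mathbf{B}}_N$ on $y_N$ and $\tilde{\mathbf{B}}$ on $y$ with $\tilde{\mathbf{B}}_N\to\tilde{\mathbf{B}}$ in $C^{0,\alpha}$.

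To conclude, I would absorb the Bishop-frame non-uniqueness into $\bar\theta_N$. Any Bishop frame on a fixed curve equals $\tilde{\mathbf{B}}\mathbf{\Theta}^{\theta_0}$ for some constant $\theta_0\in\R$, cf.~\eqref{eq:arbitrary-frame}. Hence the arbitrary coherent frames in the statement admit representations $\mathbf{R}_N=\tilde{\mathbf{B}}_N\mathbf{\Theta}^{\tilde\theta_N+z_N}$ and $\mathbf{R}=\tilde{\mathbf{B}}\mathbf{\Theta}^{\tilde\theta+z}$ for some constants $\tilde\theta_N,\tilde\theta\in\R$. Setting $\bar\theta_N:=\tilde\theta-\tilde\theta_N$ gives
\[
\mathbf{R}_N\mathbf{\Theta}^{\bar\theta_N}=\tilde{\mathbf{B}}_N\mathbf{\Theta}^{\tilde\theta+z_N},
\]
and since $\tilde{\mathbf{B}}_N\to\tilde{\mathbf{B}}$ and $z_N\to z$ in $C^{0,\alpha}$ while $\theta\mapsto\mathbf{\Theta}^\theta$ is smooth, the right-hand side converges in $C^{0,\alpha}$ to $\tilde{\mathbf{B}}\mathbf{\Theta}^{\tilde\theta+z}=\mathbf{R}$, as required.

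The only mildly delicate point is the uniform non-degeneracy argument for $|(y^{X_N})'|$ leading to $|y'|\equiv 1$: both components of $\F_N^{\rm pen}$ are genuinely needed here, the $N^{\alpha}$-term to pin the total length and the $N^{\beta}$-term to make the spline knots become dense enough for the Hölder-equicontinuity of $y_N'$ to force $|(y^{X_N})'|\to 1$ uniformly. The remainder is simple bookkeeping about the one-parameter non-uniqueness of the Bishop frame.
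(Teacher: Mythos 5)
Your proposal is correct and follows essentially the route the paper intends: the paper declares the corollary an immediate consequence of the equicoercivity bound together with Lemma~\ref{lem:frame_conv}, and your argument fills in exactly that outline, with the verification of $|y'|\equiv 1$ via the penalty term mirroring the computation in the $\liminf$ section and the constants $\bar\theta_N$ absorbing the one-parameter non-uniqueness of the Bishop frame as in \eqref{eq:arbitrary-frame}. Your explicit handling of the uniform lower bound $|y_N'|\ge c$ (needed so that Lemma~\ref{lem:frame_conv} applies to the whole sequence) is a welcome detail the paper leaves implicit.
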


%$y_N, \theta_N$ im diskreten Raum, mit $F^N(y_N,\theta_N)<C$, $(y_N,\theta_N) \to (y,\theta)$ in $L^2$. Weiters sei $d^j(x)$ frame von $y,\theta$ (passend zu $\theta$) (also ist beliebiger Bishop-frame von $y$ plus $\theta$).

%Dann ex. $\bar\theta_N \in \R$, so dass gilt: Sei $d^j_N$ beliebiger Frame von $(y_N,\theta_N)$ (selber Sinn wie oben, wobei Bishop-frame entsprechend definiert ist), Dann konvergiert (modulo TF) $d^j_N(x)$ rotiert um $\bar\theta_N$ in $C^\alpha$ gegen  $d^j(x)$.

\begin{theorem}[$\Gamma$-convergence]
\label{theo:main}
Let $ \F_{N}, \F : L^{2}((0,L);\R^{3}) \times L^{2}((0,L)) \to [0,\i] $ be defined as in \eqref{eq:def-F-N} and \eqref{eq:continuous-energy} respectively.
Then 
\[ \Gamma\text{-}\lim_{ N \to \i } \F_{N} = \F \]
in $ L^{2}((0,L);\R^{3}) \times L^{2}((0,L)) $ (in the norm topology).
\end{theorem}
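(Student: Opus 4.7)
The plan is to establish the two inequalities for $\Gamma$-convergence separately, relying on the weak $W^{2,2}((0,L);\R^3)\times W^{1,2}((0,L))$ compactness that follows from the preceding equicoercitivity statement together with the structural properties of the spline reconstruction developed in Sections~\ref{subsect:cubic} and \ref{subsect:assign-spline}.

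For the liminf inequality I take a sequence $(y_N,z_N)\to(y,z)$ in $L^2\times L^2$ with $\liminf_N \F_N(y_N,z_N)<\infty$, pass to a realising subsequence, and extract weak limits $y_N\weakly y$ in $W^{2,2}$ and $z_N\weakly z$ in $W^{1,2}$. Weak lower semicontinuity of the squared $L^2$-norm yields $\liminf\|y_N''\|_{L^2}^{2}\ge\|y''\|_{L^2}^{2}$ and the analogous bound for $z_N'$. The crucial task is then to show that $|y'|\equiv 1$ a.e.\ so that $(y,z)$ is admissible for $\F$. Boundedness of the penalty \eqref{eq:penalty-term} forces $\lambda(X_N)\to 1$ at rate $N^{-\alpha}$ and $\max_i|x_i-x_{i-1}|\le CN^{-\beta}\to 0$. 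By \eqref{eq:properties-spline-knots}--\eqref{eq:properties-spline-linear} the derivative of $y^{X_N}$ has magnitude exactly $\lambda(X_N)$ at every interior knot $t_i$ and on the two linear end-pieces; combined with the uniform Hölder continuity of $y_N'$ coming from $y_N''\in L^2$ and the vanishing knot spacing, this forces $|(y^{X_N})'|\to 1$ uniformly, hence $|y'|\equiv 1$ in the limit.

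For the recovery sequence, given $(u,\theta)$ with $\F(u,\theta)<\infty$, I first reduce by a density argument to smoother data. Then, as announced in the abstract, I discretise by choosing $x_i^N=u(s_i^N)$ with the $s_i^N$ selected so that consecutive Euclidean distances $|x_i^N-x_{i-1}^N|$ are all equal; this is possible by an intermediate-value argument using $|u'|\equiv 1$. A Taylor expansion of $|u(s_i^N)-u(s_{i-1}^N)|$ in the spacing $L/N$ produces $|\lambda(X_N)-1|=O(N^{-2})$, so the first summand of the penalty tends to zero for every $\alpha<2$, while the second vanishes for every $\beta<1$. The special identities \eqref{eq:spline-1}--\eqref{eq:spline-3} then yield a clean formula for each cubic piece purely in terms of second differences $x_{i+1}-2x_i+x_{i-1}$, which approximate $u''(s_i)/N^2$ to higher order; summing gives convergence of the bending energy. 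The angles $\varphi_i^N$ are defined as samples of $\theta$ shifted by the constant rotation that aligns the discrete and continuous Bishop frames from Lemma~\ref{lem:frame_conv}, after which torsion convergence follows from the piecewise-linear definition of $z^{X,\Phi}$.

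The main obstacle I anticipate is the last point: matching the discrete Bishop frame to the prescribed continuous one. Corollary~\ref{theo:frames} only guarantees convergence of frames modulo a constant rotation, and this rotation must be absorbed into the definition of $\varphi_i^N$ with care so that $z^{X_N,\Phi_N}\to\theta$ strongly in $W^{1,2}$ rather than merely up to a global constant. A secondary delicate point on the liminf side is that weak $W^{2,2}$ convergence alone does not transfer the knotwise identity $|(y^{X_N})'(t_i)|=\lambda(X_N)$ to the limit pointwise; it is precisely the $N^\beta$-control on the mesh size, rather than $\lambda(X_N)\to 1$ alone, that closes this gap and thereby motivates the two-term structure of \eqref{eq:penalty-term}.
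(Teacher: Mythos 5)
Your overall architecture coincides with the paper's: on the $\liminf$ side the paper extracts $y_N\weakly u$ in $W^{2,2}$ and $z_N\weakly\theta$ in $W^{1,2}$, uses weak lower semicontinuity for both energy terms, and proves $|u'|\equiv 1$ exactly as you propose -- via the knot identity $|y_N'(t_i^N)|=\lambda_N$ from \eqref{eq:properties-spline-knots}, the bound $\bigl||y_N'(t)|-\lambda_N\bigr|\le C\sqrt{t_{i+1}^N-t_i^N}$ (your H\"older-$1/2$ continuity of $y_N'$, obtained by Cauchy--Schwarz), and the two penalty terms forcing $\lambda_N\to 1$ and the mesh to vanish. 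Your observation that the mesh control, not $\lambda_N\to 1$ alone, is what closes the gap is precisely the content of the paper's Example~\ref{ex:spacing}. On the recovery side, your equal-Euclidean-spacing construction, the $|\lambda(X_N)-1|=O(N^{-2})$ estimate, and the reduction of the bending energy to second-difference sums via \eqref{eq:spline-1}--\eqref{eq:spline-3} all match Sections~\ref{sect:app} and \ref{sect:recovery}; your ``intermediate-value argument'' is carried out carefully in Lemma~\ref{lemma:N-prop} and \eqref{eq:def-rN}, where one needs the chain to end \emph{exactly} at $u(L)$, which requires the monotonicity/continuity analysis of $r\mapsto N(r)$ rather than a bare intermediate-value step.

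The one genuine flaw is your anticipated ``main obstacle,'' which is spurious and, as you propose to resolve it, would damage the proof. Neither $\F_N$ nor $\F$ references a frame: the torsional energies are $\int_0^L|(z^{X,\Phi})'|^2\,dt$ and $\int_0^L|\theta'|^2\,ds$, and the convergence required is $z_N\to\theta$ in $L^2$ plus convergence of energies -- the vector $\Phi$ is a \emph{free} variable, not constrained to be measured against any particular discrete Bishop frame inside the variational statement. Accordingly the paper simply samples, $\varphi_i^N:=\theta\bigl(\tfrac{s_{i-1}^N+s_i^N}{2}\bigr)$, proves $z_N\to\theta$ uniformly and $\|z_N'\|_{L^2}\to\|\theta'\|_{L^2}$ by a Taylor-remainder comparison with the equidistant difference quotients, and never invokes Lemma~\ref{lem:frame_conv} in the recovery construction. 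If you instead shift every $\varphi_i^N$ by the frame-aligning constant of Corollary~\ref{theo:frames} and that constant does not vanish, you destroy the required convergence $z_N\to\theta$ in $L^2$; and there is no need for strong $W^{1,2}$ convergence of $z_N$ in the definition of a recovery sequence (it follows a posteriori from $L^2$ convergence plus energy convergence, but it is not the thing to engineer). Frame matching genuinely matters only for the boundary-condition remarks surrounding Corollary~\ref{theo:frames} and Remark~\ref{rem:clamped_frame_recovery}, where it is handled through Theorem~\ref{theo:Hornung} and the straight-ends modification, not inside the proof of Theorem~\ref{theo:main}. Deleting the rotation shift from your construction makes your proposal essentially identical to the paper's proof.
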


The proofs of these theorems are organized as follows. In Section~\ref{sect:lowerbound} we prove the $\liminf$-inequality and also partially justify the choice of the penalty terms from~\eqref{eq:penalty-term}.
In order to find a recovery sequence, we first in Section~\ref{sect:app} approximate a smooth curve with polygonal lines whose segments have equal lengths. The corresponding splines are then even piecewise quadratic. Thus, their  bending energies from \eqref{eq:discrete-energy-bending} are a finite sums. We use such splines in Section~\ref{sect:recovery} for the construction of a recovery sequence.

Some remarks are in order.
\begin{remark}
Corollary \ref{theo:frames} ensures that the $\Gamma$-convergence result is compatible with physically reasonable boundary conditions: at the clamped end (or ends) of a rod one wants to fix a \emph{frame}, and not the rotation $\theta$ compared to a Bishop frame. Since the frames of approximating curves converge strongly with respect to a H{\"o}lder-norm, however, the set of curves satisfying a clamped boundary condition is closed. The converse, namely the construction of a recovery sequence satisfying a clamped boundary condition is also possible, see Remark \ref{rem:clamped_frame_recovery}.
\end{remark}

\begin{remark}
We identified the discrete description of our rods as sets of points and angles with framed spline curves to simplify an $\Gamma$-convergence approach. This identification yields coinciding function spaces for discrete and limiting continuous curves. On the other hand, it would be more natural for discrete data to write the energy as a finite sum with each term being uniquely determined by a triple of consecutive points and corresponding two angles. We elaborate on this in section \ref{sec:triple} just below.
\end{remark}

\subsection{An energy depending only on nearest neighbors} \label{sec:triple}

Let us for a triple of points $ x_{-1} , x_{0} , x_{1} \in \R^{3} $ define the corresponding bending energy by
\begin{equation*}
\mathrm{Bend}( x_{-1}, x_{0} , x_{1} )
%:= 2 ( 1 - \cos \phi ) \frac{ r_{1}^2 - r_{1} r_{0} + r_{0}^2 }{ ( \frac{ r_{1} + r_{0} }{2} )^{3} } 
% =  ( 1 - \cos \phi ) \frac{ r_{1}^{3} + r_{0}^{3} }{ ( \frac{ r_{1} + r_{0} }{2} )^{4} }
:= 2 \sin^2 \frac{ \phi }{2} \cdot \frac{ r_{1}^{3} + r_{0}^{3} }{ ( \frac{ r_{1} + r_{0} }{2} )^{4} }
\end{equation*}
with $ r_{0} := | x_{0} - x_{-1} | $, $ r_{1} := | x_{1} - x_{0} | $ and $ \phi $ the angle between $ x_{1} - x_{0} $ and $ x_{0} - x_{-1} $, see Figure~\ref{figure:triple}.
%
%If we instead of $ \alpha $ use the angle $ \phi $ of the change of direction, i.e., the angle between $ x_{1}-x_{0} $ and $ x_{0}-x_{-1} $, we obtain
%
%\begin{equation}
%\mathrm{Bend}( x_{-1}, x_{0} , x_{1} )
%=  ( 1 - \cos \phi ) \frac{ r_{1}^{3} + r_{0}^{3} }{ ( \frac{ r_{1} + r_{0} }{2} )^{4} } 
%=  2 \sin^2 \frac{ \phi }{2} \frac{ r_{1}^{3} + r_{0}^{3} }{ ( \frac{ r_{1} + r_{0} }{2} )^{4} }
%\end{equation}
%
If $ r_{0} = r_{1} = r $, this expression reduces to
\begin{equation*}
\mathrm{Bend}( x_{-1}, x_{0} , x_{1} )
= \frac{4}{r} \sin^2 \frac{ \phi }{2} 
\end{equation*}
This is closely related to (and in our limit of approximating a $W^{2,2}$-curve agrees with, since necessarily $\phi\to 0$ as $r\to 0$) the bending energy in \cite{bergou2008discrete}, as they have $ \frac{4}{r} \tan^2 \frac{ \phi }{2}  $ (if we set $ \beta = 2 $ in their expression for the bending energy).

\begin{figure}[ht]
\includegraphics[width=.57\textwidth]{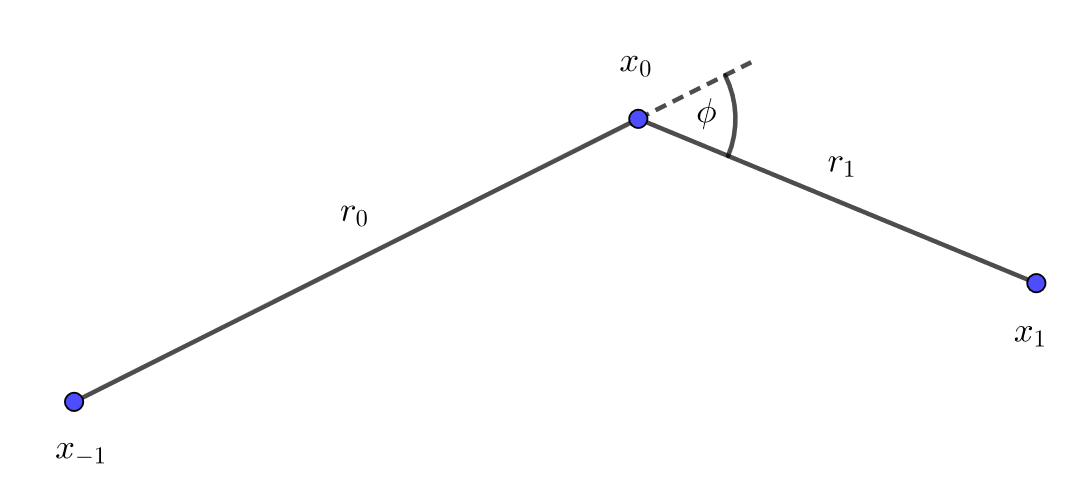}	
\caption{For a discrete rod, a more natural concept of a bending energy is the sum of energies in each inner node.
Since the local form around a node is given by the lengths of the adjacent edges and on the angle between them, the energy should depend only on them.}
\label{figure:triple}
\end{figure}

Moreover, if also the twisting angles $ \varphi_{0} , \varphi_{1} \in \R $ are given, let

\begin{equation*}
\mathrm{Tor}( x_{-1}, x_{0} , x_{1} , \varphi_{0} , \varphi_{1} )
:=  \frac{ | \varphi_{1} - \varphi_{0} |^{2} }{ \frac{ r_{1} + r_{0} }{2} }.
\end{equation*}

\begin{proposition}
Let us for any discrete framed rod
$ (X,\Phi) \in ( \R^{3} )^{N+1} \times \R^{N} $
define its local bending and torsional energy by
\begin{equation*}
\F_{N}^{\rm bend, loc}(X)
:= \sum_{i=1}^{N-1} \mathrm{Bend}( x_{i-1}, x_{i} , x_{i+1} )
\end{equation*}
and
\begin{equation*}
\F_{N}^{\rm tor, loc}(X,\Phi)
:= \sum_{i=1}^{N-1} \mathrm{Tor}( x_{i-1}, x_{i} , x_{i+1} , \varphi_{i-1} , \varphi_{i} ).
\end{equation*}
Then
\begin{equation*}
\F_{N}^{\rm bend}(X) = \lambda(X)^{3} \F_{N}^{\rm bend, loc}(X)
\quad \mbox{and} \quad
\F_{N}^{\rm tor}(X,\Phi) = \lambda(X) \F_{N}^{\rm tor, loc}(X,\Phi).
\end{equation*}
Consequently, if we replace in the definition of $ \F_{N}^{\rm disc} $ in \eqref{eq:discrete-energy} the functionals 
$ \F_{N}^{\rm bend} $ and $ \F_{N}^{\rm tor} $ by the functionals $ \F_{N}^{\rm bend,loc} $ and $ \F_{N}^{\rm tor,loc} $, the $ \Gamma $-convergence result in Theorem~\ref{theo:main} still holds.
\end{proposition}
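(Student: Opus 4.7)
The plan is to verify the two scaling identities by direct computation on each interval of the spline, and then use the penalty to transfer the $\Gamma$-convergence result.

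\textbf{Step 1: Reduce to integrals on the unscaled parameter.}
Because $y^{X}(t) = \eta^{X}(\lambda t)$ with $\lambda = \lambda(X)$, a change of variables gives
\[
\F_{N}^{\rm bend}(X) = \int_{0}^{L} |\lambda^{2}\,\eta''(\lambda t)|^{2}\,dt = \lambda^{3}\int_{0}^{\ell(X)} |\eta''(\tau)|^{2}\,d\tau,
\]
and analogously $\F_{N}^{\rm tor}(X,\Phi) = \lambda \int_{0}^{\ell(X)}|\zeta'(\tau)|^{2}\,d\tau$. So it suffices to show the unscaled identities
\[
\int_{0}^{\ell(X)}|\eta''|^{2}\,d\tau = \F_{N}^{\rm bend,loc}(X), \qquad \int_{0}^{\ell(X)}|\zeta'|^{2}\,d\tau = \F_{N}^{\rm tor,loc}(X,\Phi).
\]

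\textbf{Step 2: Torsion, by inspection.}
On the linear end segments $\zeta'\equiv 0$. On each interior piece $[\tau_{i},\tau_{i+1}]$ the function $\zeta$ is affine with slope $(\varphi_{i+1}-\varphi_{i})/(\tau_{i+1}-\tau_{i})$, and $\tau_{i+1}-\tau_{i} = \frac{1}{2}(r_{i}+r_{i+1})$ by \eqref{eq:def-tau}, where $r_{j}=|x_{j}-x_{j-1}|$. Integrating the constant $|\zeta'|^{2}$ over this interval gives exactly $|\varphi_{i+1}-\varphi_{i}|^{2}/\bigl(\tfrac{r_{i}+r_{i+1}}{2}\bigr)$, which summed over $i$ is $\F_{N}^{\rm tor,loc}$.

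\textbf{Step 3: Bending, by direct calculation on each cubic piece.}
Again $\eta'' \equiv 0$ on the two linear end segments. On $[\tau_i,\tau_{i+1}]$ the spline $S_{i}$ is the cubic constructed in Section~\ref{subsect:cubic} applied to the triple $(x_{i-1},x_{i},x_{i+1})$ with $\sigma_{0}=\sigma_{1}=1$ and $T_{i} = \frac{r_{i}+r_{i+1}}{2}$. Writing $S_{i}(\tau)=A\tau^{3}+B\tau^{2}+\mathbf{e}_{0}\tau+\tfrac{1}{2}(x_{i-1}+x_{i})$ with $\mathbf{e}_{0},\mathbf{e}_{1}$ the unit tangents, and using the explicit formulas for $AT^{3}$ and $BT^{2}$ derived there, one obtains
\[
AT_{i}^{3} = \frac{r_{i}-r_{i+1}}{2}(\mathbf{e}_{1}-\mathbf{e}_{0}), \qquad BT_{i}^{2} = \frac{2r_{i+1}-r_{i}}{2}(\mathbf{e}_{1}-\mathbf{e}_{0}).
\]
Since $S_{i}''(\tau) = 6A\tau+2B$, a straightforward expansion gives
\[
\int_{0}^{T_{i}}|S_{i}''|^{2}\,d\tau = 12|A|^{2}T_{i}^{3} + 12(A\cdot B)T_{i}^{2} + 4|B|^{2}T_{i} = \frac{|\mathbf{e}_{1}-\mathbf{e}_{0}|^{2}}{T_{i}^{3}}\bigl(3(r_{i}-r_{i+1})^{2}+3(r_{i}-r_{i+1})(2r_{i+1}-r_{i})+(2r_{i+1}-r_{i})^{2}\bigr).
\]
The bracket simplifies to $r_{i}^{2}-r_{i}r_{i+1}+r_{i+1}^{2} = (r_{i}^{3}+r_{i+1}^{3})/(r_{i}+r_{i+1}) = (r_{i}^{3}+r_{i+1}^{3})/(2T_{i})$, and $|\mathbf{e}_{1}-\mathbf{e}_{0}|^{2}=4\sin^{2}(\phi_{i}/2)$, so (after multiplying the bracket by the appropriate constant — this is the only computation that needs a careful bookkeeping) the right-hand side equals $2\sin^{2}(\phi_{i}/2)\,(r_{i}^{3}+r_{i+1}^{3})/T_{i}^{4} = \mathrm{Bend}(x_{i-1},x_{i},x_{i+1})$. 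Summation over $i=1,\ldots,N-1$ gives $\F_{N}^{\rm bend,loc}(X)$.

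\textbf{Step 4: Transfer of $\Gamma$-convergence.}
Define $\tilde\F_{N}^{\rm disc}$ like $\F_{N}^{\rm disc}$ but with $\F_{N}^{\rm bend,loc}$, $\F_{N}^{\rm tor,loc}$ in place of $\F_{N}^{\rm bend}$, $\F_{N}^{\rm tor}$. From Steps 1–3,
\[
\tilde\F_{N}^{\rm disc}(X,\Phi) = \lambda(X)^{-3}\F_{N}^{\rm bend}(X) + \lambda(X)^{-1}\F_{N}^{\rm tor}(X,\Phi) + \F_{N}^{\rm pen}(X).
\]
The penalty term $N^{\alpha}|\lambda(X)-1|$ forces $\lambda(X_{N})\to 1$ along any sequence with bounded $\tilde\F_{N}^{\rm disc}$ (and likewise for $\F_{N}^{\rm disc}$); hence $\lambda(X_{N})^{-3}$ and $\lambda(X_{N})^{-1}$ both tend to $1$, and the two functionals differ by a factor that is $1+o(1)$ on bounded sublevel sets. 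Both the liminf inequality and the construction of the recovery sequence for $\F$ therefore pass unchanged from Theorem~\ref{theo:main} to the localized version.

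The only real work is the algebraic simplification in Step 3; everything else is bookkeeping. I expect the factor $r_{i}^{2}-r_{i}r_{i+1}+r_{i+1}^{2}=(r_{i}^{3}+r_{i+1}^{3})/(r_{i}+r_{i+1})$ to be the crucial identity that matches the rather asymmetric-looking definition of $\mathrm{Bend}$ to the integral of $|S_{i}''|^{2}$.
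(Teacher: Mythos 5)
Your proof is correct and takes essentially the same route as the paper: a change of variables to the unscaled parameter, explicit computation of the cubic coefficients $A_{i}$, $B_{i}$ from Section~\ref{subsect:cubic}, the key identity $r_{i}^{2}-r_{i}r_{i+1}+r_{i+1}^{2}=(r_{i}^{3}+r_{i+1}^{3})/(r_{i}+r_{i+1})$, and the observation that the penalty term forces $\lambda(X_{N})\to 1$ so that the $\Gamma$-limit is unaffected. Your factorization of $A_{i}T_{i}^{3}$ and $B_{i}T_{i}^{2}$ as scalar multiples of $\mathbf{e}_{1}-\mathbf{e}_{0}$ is only a (pleasant) reorganization of the scalar products $|A_{i}|^{2}$, $A_{i}\cdot B_{i}$, $|B_{i}|^{2}$ that the paper computes directly, not a different argument.
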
 

\begin{proof}
We use the denotations from subsection \ref{subsect:assign-spline}.
From
\[ (y^{X})''(t)
= \frac{d^2}{\t^2 } \eta^{X}( \lambda(X) t ) 
= \lambda(X)^{2} (\eta^{X})''( \lambda(X) t ), \]
it follows
\[ \F_{N}^{\rm bend}(X)
= \int_{0}^{L} | ( y^{X} )''(t) |^{2} \t 
= \int_{0}^{L} \lambda(X)^{4}  | (\eta^{X})''( \lambda(X) t ) |^{2} \t 
= \int_{0}^{ \ell(X) } \lambda(X)^{3}  | (\eta^{X})''( \tau ) |^{2} \ d\tau. \]
The function $ \eta^{X} $ was constructed in \eqref{eq:def-eta} from cubic functions
\[ S_{i}( \tau ) = A_{i} \tau^{3} + B_{i} \tau^{2} + \frac{ x_{i} - x_{i-1} }{ | x_{i} - x_{i-1} | } \tau + \frac{1}{2} ( x_{i-1} + x_{i} ) \]
and two straight parts. Hence
\[ \int_{0}^{ \ell(X) } | (\eta^{X})''( \tau ) |^{2} \ d\tau 
= \sum_{i=1}^{N-1} \int_{0}^{ \tau_{i+1} - \tau_{i} } | S_{i}''( \tau ) |^{2} \ d\tau 
= \sum_{i=1}^{N-1} \int_{0}^{ T_{i} } | 6 A_{i} \tau + 2 B_{i} |^{2} \ d\tau \]
with $ T_{i} = \frac{1}{2} ( | x_{i+1} - x_{i} | + | x_{i} - x_{i-1} | ) $.

By computing scalar products of the expressions
\begin{eqnarray*}
%\label{eq:spline-B}
B_{i} T_{i}^{2} 
%&  =  & \left( \tfrac{3}{2} - \tfrac{ T_{i} }{ | x_{i+1} - x_{i} | } \right) x_{i+1} 
%		+ \left( \tfrac{ T_{i} }{ | x_{i+1} - x_{i} | } - \tfrac{ 2 T_{i} }{ | x_{i} - x_{i-1} | } \right) x_{i} 
%		+ \left( - \tfrac{3}{2} + \tfrac{ 2 T_{i} }{ | x_{i} - x_{i-1} | } \right) x_{i-1}, \\
%&  =  & \left( 1 - \tfrac{ | x_{i} - x_{i-1} | }{ 2 | x_{i+1} - x_{i} | } \right) x_{i+1} 
%		+ \left( - \tfrac{1}{2} + \tfrac{ | x_{i} - x_{i-1} | }{ 2 | x_{i+1} - x_{i} | } - \tfrac{ | x_{i+1} - x_{i} | }{ | x_{i} - x_{i-1} | } \right) x_{i} 
%		+ \left( - \tfrac{1}{2} + \tfrac{ | x_{i+1} - x_{i} | }{ | x_{i} - x_{i-1} | } \right) x_{i-1} \\
&  =  & \left( 1 - \tfrac{ | x_{i} - x_{i-1} | }{ 2 | x_{i+1} - x_{i} | } \right) ( x_{i+1} - x_{i} ) 
        + \left( \tfrac{1}{2} - \tfrac{ | x_{i+1} - x_{i} | }{ | x_{i} - x_{i-1} | } \right) ( x_{i} - x_{i-1} ), \\
%\label{eq:spline-A}
A_{i} T_{i}^{3} 
%&  =  & \left( -1 + \tfrac{ T_{i} }{ | x_{i+1} - x_{i} | } \right) x_{i+1} 
%   	+ \left( - \tfrac{ T_{i} }{ | x_{i+1} - x_{i} | } + \tfrac{ T_{i} }{ | x_{i} - x_{i-1} | } \right) x_{i} 
%		+ \left( 1 - \tfrac{ T_{i} }{ | x_{i} - x_{i-1} | } \right) x_{i-1} \\
%&  =  & \left( - \tfrac{1}{2} + \tfrac{ | x_{i} - x_{i-1} | }{ 2 | x_{i+1} - x_{i} | } \right) x_{i+1} 
	%	+ \left( - \tfrac{ | x_{i} - x_{i-1} | }{ 2| x_{i+1} - x_{i} | } + \tfrac{ | x_{i+1} - x_{i} | }{ 2 | x_{i} - x_{i-1} | } \right) x_{i} 
%	  + \left(  \tfrac{1}{2} - \tfrac{ | x_{i+1} - x_{i} | }{ 2 | x_{i} - x_{i-1} | } \right) x_{i-1} \\
&  =  & \left( - \tfrac{1}{2} + \tfrac{ | x_{i} - x_{i-1} | }{ 2 | x_{i+1} - x_{i} | } \right) ( x_{i+1} - x_{i} ) 
		+ \left( - \tfrac{1}{2} + \tfrac{ | x_{i+1} - x_{i} | }{ 2 | x_{i} - x_{i-1} | } \right) ( x_{i} - x_{i-1} ),
\end{eqnarray*}
%
%We repeat that in the case $ | x_{i} - x_{i-1} | = | x_{i+1} - x_{i} | $ these coefficients are particularly simple, namely
%
%\[ A_{i} = 0 
%\quad \mbox{and} \quad
%B_{i} = \frac{ x_{i+1} - 2 x_{i} + x_{i-1} }{ 2 T_{i}^{2} }. \]
%
for $ r_{i} := | x_{i} - x_{i-1} | $ and $ \phi_{i} $ being the angle between $ x_{i+1} - x_{i} $ and $ x_{i} - x_{i-1} $, we arrive at
\begin{eqnarray*}
| A_{i} |^{2} T_{i}^{6} &  =  & %\tfrac{1}{2} ( 1 - \cos \alpha_{i} ) ( r_{i+1} - r_{i} )^2, \\
\sin^{2} \tfrac{ \phi_{i} }{2} \cdot ( r_{i+1} - r_{i} )^2, \\
( A_{i} \cdot B_{i} ) T_{i}^{5} &  =  & %- \tfrac{1}{2} ( 1 + \cos \alpha_{i} ) ( 2 r_{i+1} - r_{i} ) ( r_{i+1} - r_{i} ), \\
- \sin^{2} \tfrac{ \phi_{i} }{2} \cdot ( 2 r_{i+1} - r_{i} ) ( r_{i+1} - r_{i} ), \\
| B_{i} |^{2} T_{i}^{4} &  =  & %\tfrac{1}{2} ( 1 + \cos \alpha_{i} ) ( 2 r_{i+1} - r_{i} )^2.
\sin^{2} \tfrac{ \phi_{i} }{2} \cdot ( 2 r_{i+1} - r_{i} )^2.
\end{eqnarray*}
This yields
\begin{align*}
& \int_{0}^{ T_{i} } | 6 A_{i} \tau + 2 B_{i} |^{2} \ d\tau \\
&  =  12 | A_{i} |^2 T_{i}^{3} + 12 ( A_{i} \cdot B_{i} ) T_{i}^{2} + 4 | B_{i} |^{2} T_{i} \\
%&  =  \frac{ 12 \tfrac{1}{2} ( 1 + \cos \alpha_{i} ) ( r_{i+1} - r_{i} )^2 
%        - 12 \tfrac{1}{2} ( 1 + \cos \alpha_{i} ) ( 2 r_{i+1} - r_{i} ) ( r_{i+1} - r_{i} )
%        + 4 \tfrac{1}{2} ( 1 + \cos \alpha_{i} ) ( 2 r_{i+1} - r_{i} )^2 }{ T_{i}^{3} } \\
&  =  \frac{ 12 \sin^{2} \tfrac{ \phi_{i} }{2} \cdot ( r_{i+1} - r_{i} )^2 
        - 12 \sin^{2} \tfrac{ \phi_{i} }{2} \cdot ( 2 r_{i+1} - r_{i} ) ( r_{i+1} - r_{i} )
        + 4 \sin^{2} \tfrac{ \phi_{i} }{2} \cdot ( 2 r_{i+1} - r_{i} )^2 }{ T_{i}^{3} } \\
%&  =  2 ( 1 + \cos \alpha_{i} ) 
%        \frac{ 3 ( r_{i+1} - r_{i} )^2 
%        - 3 ( 2 r_{i+1} - r_{i} ) ( r_{i+1} - r_{i} )
%        + ( 2 r_{i+1} - r_{i} )^2 }{ T_{i}^{3} } \\
%&  =  2 ( 1 + \cos \alpha_{i} ) \frac{ r_{i+1}^2 - r_{i+1} r_{i} + r_{i}^2 }{ ( \frac{ r_{i+1} + r_{i} }{2} )^{3} } \\
%&  =  ( 1 + \cos \alpha_{i} ) \frac{ r_{i+1}^{3} + r_{i}^{3} }{ ( \frac{ r_{i+1} + r_{i} }{2} )^{4} } \\
&  =  2 \sin^{2} \tfrac{ \phi_{i} }{2} \cdot \frac{ r_{i+1}^{3} + r_{i}^{3} }{ ( \frac{ r_{i+1} + r_{i} }{2} )^{4} } \\
&  =  \mathrm{Bend}( x_{i-1}, x_{i} , x_{i+1} ).
\end{align*}
Thus, the expression for the bending energies is proven.

Analogously, we have
\[ \int_{0}^{L} | ( z^{X,\Phi} )'(t) |^{2} \ dt
= \int_{0}^{ \ell(X) } \lambda(X) | ( \zeta^{X,\Phi} )'( \tau ) |^{2} \ d \tau \]
and
\[ \int_{0}^{ \ell(X) }  | ( \zeta^{X,\Phi} )'( \tau ) |^{2} \ d \tau 
= \sum_{i=0}^{N} \int_{ \tau_{i} }^{ \tau_{i+1} }  | ( \zeta^{X,\Phi} )'( \tau ) |^{2} \ d \tau 
= \sum_{i=1}^{N-1} \left( \frac{ \varphi_{i+1} - \varphi_{i} }{ \tau_{i+1} - \tau_{i} } \right)^2 ( \tau_{i+1} - \tau_{i} )
= \sum_{i=1}^{N-1} \frac{ | \varphi_{i+1} - \varphi_{i} |^{2} }{ T_{i} }.  \]
The comment about $ \Gamma $-convergence follows from the fact that the penalty term in $ \F_{N}^{\rm disc} $ insures that $ \lambda(X) \to 1 $.
\end{proof}

%%%%%%%%%%%%%%%%%%%%%%%%%%%%%%%%%%%%%%%%%%%%%%%%%%%%%%%%%%%%%%%%%%%%%%%%%%%%%%%%%%%%%%%%%%%%%%%%%%%%%%%%%%%%%%%%%%%%%%%%%%
%%%%%%%%%%%%%%%%%%%%%%%%%%%%%%%%%%%%%%%%%%%%%%%%%%%%%%%%%%%%%%%%%%%%%%%%%%%%%%%%%%%%%%%%%%%%%%%%%%%%%%%%%%%%%%%%%%%%%%%%%%
\section{Lower bound and compactness} \label{sect:lowerbound}
%%%%%%%%%%%%%%%%%%%%%%%%%%%%%%%%%%%%%%%%%%%%%%%%%%%%%%%%%%%%%%%%%%%%%%%%%%%%%%%%%%%%%%%%%%%%%%%%%%%%%%%%%%%%%%%%%%%%%%%%%%
%%%%%%%%%%%%%%%%%%%%%%%%%%%%%%%%%%%%%%%%%%%%%%%%%%%%%%%%%%%%%%%%%%%%%%%%%%%%%%%%%%%%%%%%%%%%%%%%%%%%%%%%%%%%%%%%%%%%%%%%%%
We start by showing the $ \liminf $-inequality in Theorem~\ref{theo:main}.
Let $ ( y_{N} , z_{N} ) $ be a sequence in $ L^{2}( (0,L) ; \R^{3} ) \times L^{2}((0,L)) $ that converges to $ (u,\theta) $ (in the norm topology).
As usual, we may restrict ourselves to the sequences whose members lie in the domains of the corresponding functionals.
Hence, for every $ N \in \N $ there exist
\[ X_{N} = ( x^{N}_{0} , \ldots , x^{N}_{N} ) \in (\R^{3})^{N+1}
\quad \mbox{and} \quad 
\Phi_{N} = ( \varphi^{N}_{1} , \ldots , \varphi^{N}_{N} ) \in \R^{N} \]
such that $ y_{N} = y^{X_{N}} $ and $ z_{N} = z^{X_{N},\Phi_{N}} $.
Since
\[ \F_{N}( y_{N} , z_{N} ) = \F^{\rm bend}( y_{N} ) + \F^{\rm tor}( y_{N} , z_{N} ) + \F_{N}^{\rm pen}( X_{N} ), \]
%\todo{Or we write explicitly the first two terms with norm of derivatives -- looks fine as is PWD}
%
we may moreover suppose that the sequences $ \F^{\rm bend}( y_{N} ) $, $ \F^{\rm tor}( y_{N} , z_{N} ) $ and $ \F_{N}^{\rm pen}( X_{N} ) $ even converge (to finite values).
Hence,
\begin{equation*}
%\label{eq:assumptions-liminf}
\| y_{N}'' \|_{ L^{2} } \le C. 
\end{equation*}
From Poincar\'{e}'s inequality it follows that $ y_{N} $ is bounded in $ W^{2,2}( (0,L) ; \R^{3} ) $.
By extracting a non-relabeled subsequence, we get $ y_{N} \weakly u $ in $ W^{2,2}( (0,L) ; \R^{3} ) $ and thus $ y_{N} \to u $ in $ C^{1,\alpha}( [0,L] ; \R^{3} ) $ for any $ \alpha \in [0,1/2) $.
Since $ y_{N}'' \weakly u'' $ in $ L^{2} $, by lower semi-continuity of the norm in the weak topology we have
\begin{equation}
\label{eq:liminf-bend-1}
\lim_{N \to \i} \F^{\rm bend}( y_{N} ) 
= \lim_{N \to \i} \| y_{N}'' \|_{ L^{2} }^{2} 
\ge \| u'' \|_{ L^{2} }^{2}.
\end{equation}
Let us denote $ \lambda_{N} := \lambda( X_{N} ) $.
Let $ \tau^{N}_{i} $ be defined for $ X_{N} $ as in \eqref{eq:def-tau}.
Then $ t^{N}_{i} := \lambda_{N}^{-1} \tau^{N}_{i} $ are the points of possible non-continuity of $ y''_{N} $.
As we saw in \eqref{eq:properties-spline-knots},
\[ | y_{N}'( t^{N}_{i} ) | = \lambda_{N}. \]
For every $ t^{N}_{i} \le t \le t^{N}_{i+1} $, $ i \in \{ 1 , \ldots , N-1 \} $, we notice that 
\[ y_{N}'( t ) - y_{N}'( t^{N}_{i} )
= \int_{ t^{N}_{i} }^{ t } y_{N}''( \xi ) \ d \xi.
\]
Thus, 
\[ | | y_{N}'( t ) | - \lambda_{N} |
\le \int_{ t^{N}_{i} }^{ t } | y_{N}''( \xi ) | \ d \xi
\le C \sqrt{ t^{N}_{i+1} - t^{N}_{i} }
= \frac{C}{ \lambda_{N}^{1/2} } \max_{i} \sqrt{ | x^{N}_{i+1} - x^{N}_{i} | }
\]
since 
\[ t^{N}_{i+1} - t^{N}_{i} = \frac{ \tau^{N}_{i+1} - \tau^{N}_{i} }{ \lambda_{N} } = \frac{ | x^{N}_{i+1} - x^{N}_{i} | }{ \lambda_{N} }. \]
Therefore,
\begin{equation*}
%\label{eq:uniform-to-1}
| | y_{N}'(t) | - 1 |  
\le | | y_{N}'(t) | - \lambda_{N} |  + | \lambda_{N} - 1 |
\le \frac{C}{ \lambda_{N}^{1/2} } \max_{i} \sqrt{ | x^{N}_{i+1} - x^{N}_{i} | } + | \lambda_{N} - 1 |.
\end{equation*}
For $t \le t_{1}$ and $ t \ge t_{N} $, we have $ | y_{N}'(t) | = \lambda_{N} $, see \eqref{eq:properties-spline-linear}.
From $ \F^{\rm pen}_{N}( X_{N} ) \le C $ it follows that $ \lambda_{N} \to 1 $ and $ \max_{i} | x^{N}_{i+1} - x^{N}_{i} | \to 0 $.
Hence, for every $ t \in [0,L] $
\[ 1 = \lim_{ N \to \i } | y_{N}'(t) | =  | u'(t) |. \]
Therefore, $ u $ lies in the domain of $ \F^{\rm bend} $ and $ \F^{\rm bend}(u) = \| u'' \|_{L^{2}} $. 
This together with \eqref{eq:liminf-bend-1} implies
\begin{equation}
\label{eq:liminf-bend-2}
\lim_{N \to \i} \F^{\rm bend}( y_{N} ) 
\ge \F^{\rm bend}( u ).
\end{equation}
In a similar manner, we furthermore get a non-relabeled subsequence $ z_{N} $ such that $ z_{N} \weakly \theta $ in $ W^{1,2}((0,L)) $ and thus $ z_{N} \to \theta $ in $ C^{0,\alpha}([0,L]) $ for any $ \alpha \in [0,1/2) $.
Again we employ lower semi-continuity of the norm in the weak topology to conclude that $ z_{N}' \weakly \theta' $ in $ L^{2} $ implies
\begin{equation}
\label{eq:liminf-tor} 
\lim_{N \to \i} \F^{\rm tor}( z_{N} ) 
= \lim_{N \to \i} \| z_{N}' \|_{ L^{2} }^{2}
\ge \| \theta' \|_{ L^{2} }^{2} 
= \F^{\rm tor}( \theta). 
\end{equation}
Let us summarize. 
If $ ( y_{N} , z_{N} ) \to (u,\theta) $ in $ L^{2}( (0,L) ; \R^{3} ) \times L^{2}((0,L)) $ 
and $$ \liminf_{N \to \i} \F_{N}( y_{N} , z_{N} ) < \i, $$ then $ (u, \theta ) $ lies in the domain of $ \F $, i.e.
\[ u \in W^{2,2}( (0,L) ; \R^{3} ) \ \mbox{with} \ | u' | \equiv 1
\quad \mbox{and} \quad
\theta \in W^{1,2}( (0,L) ) \]
and by \eqref{eq:liminf-bend-2}, \eqref{eq:liminf-tor}
\[ \liminf_{N \to \i} \F_{N}( y_{N} , z_{N} ) \ge \F( u , \theta ). \]

\begin{remark} \label{remark:penalty}
Along the lines of the proof, we perceive why the penalty term of this form is necessary. %\todo{Maybe not ``necessary'' in the strict mathematical sense. -- fine. PWD}
Since we want to approximate rods by discrete ones, it is clear that we must impose $ \lambda_{N} \to 1 $. 
In our definition of the assigned spline, we rescale the interval and have thus a priori no control on the lengths.
For that reason, we must indeed penalize discrete rods whose length differ excessively from $L$.
Example \ref{ex:spacing} below shows that also the second penalization is necessary. 
Even if $ \lambda_{N} \to 1 $, we may still run in trouble if the fineness of the partitions does not converge to 0.
\end{remark}

\begin{example} \label{ex:spacing}
Let $ L = 3 $, and let us for each $ N \ge 2 $ define the $ (N+1) $-tuple
\[ X_{N} := ( x_{0}^{N} , \ldots , x_{N}^{N} ) \]
with
\begin{equation*}
x_{0}^{N} := (0,0), \quad
x_{i}^{N} := ( \tfrac{4}{2^{N-i}} , 0 ) \ \mbox{for $ i = 1 , \ldots , N-1 $}, \quad
x_{N}^{N} := (2,1). 
\end{equation*}
The sequence of the corresponding splines $ y_{N} := y^{X_{N}} $ is constant for $ N \ge 3 $ and is equal to
\[ y : [0,3] \to \R^{2},
\quad
y(t) :=
\begin{cases}
(t,0) & t \le \frac{3}{2} \\
(t,0) + \frac{1}{2} (t - \frac{3}{2} )^{2} (-1,1)  & \frac{3}{2} \le t \le \frac{5}{2} \\
(2,t-2) & t \ge \frac{5}{2} \\
\end{cases} \]
Hence, $ \lambda_{N} = 1 $ for all $ N \in \N $. For an illustration of the spline, see Figure \ref{fig:spacing}. 
\begin{figure}[ht]
\includegraphics[width=.5\textwidth]{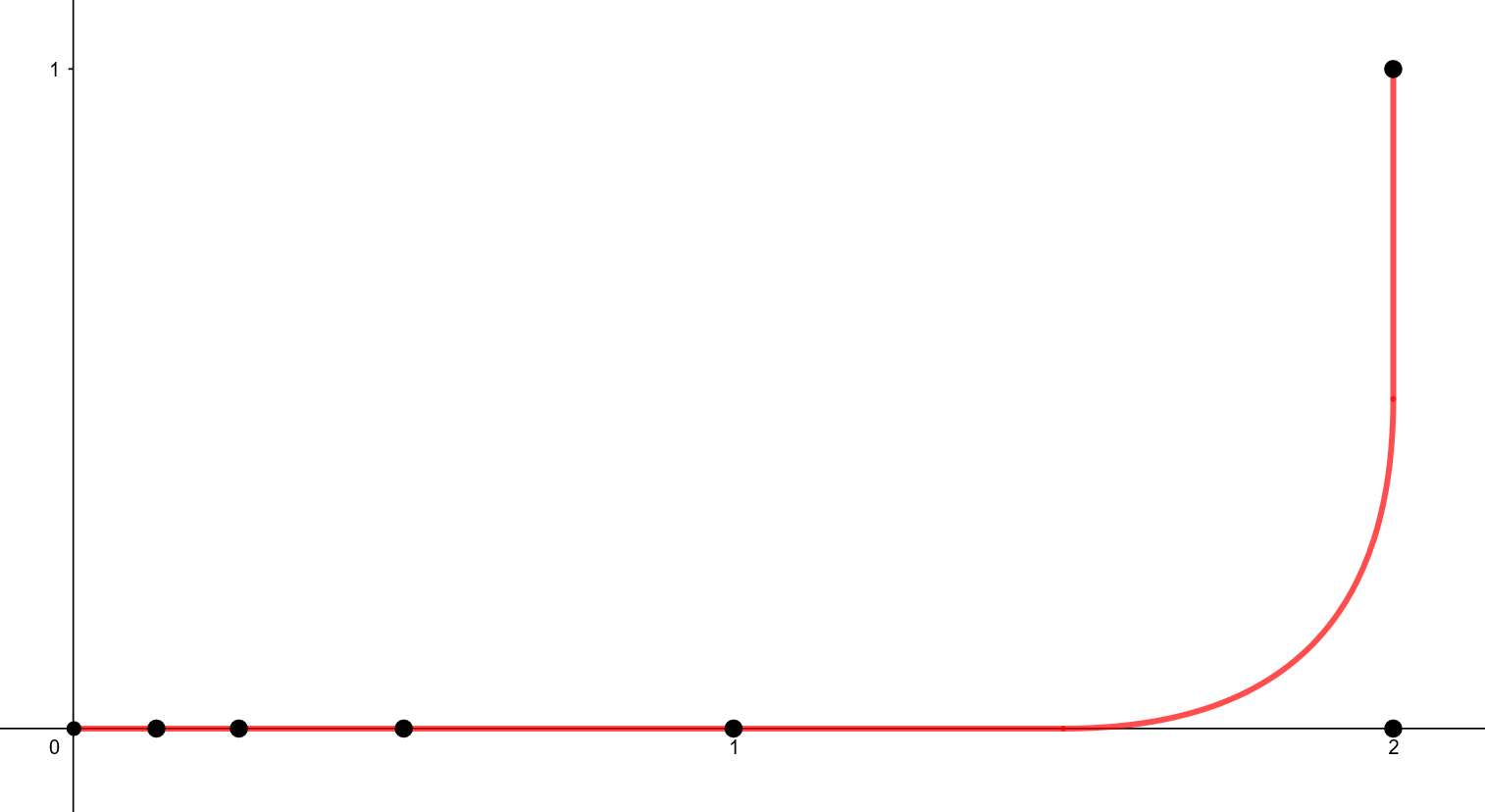}	
\caption{$ X_{N} $ and graph of the spline. In Example~\ref{ex:spacing} we construct ordered set of points in the following way. We start by $(0,0)$, $(1,0)$, $(2,0)$ and $(2,1)$, and bisect in each step the first (the most left) segment. The induced spline, however, remains the same. Here, the situation for $N=6$ is depicted.} 
\label{fig:spacing}
\end{figure}
However, the modulus of
\[ y'(t) =
\begin{cases}
(1,0) & t \le \frac{3}{2} \\
(1,0) + (t - \frac{3}{2} ) (-1,1)  & \frac{3}{2} \le t \le \frac{5}{2} \\
(0,1) & t \ge \frac{5}{2} \\
\end{cases} \]
is not 1 everywhere, e.g., $ y'(2) = ( \frac{1}{2} ,  \frac{1}{2} ) $. 
%\todo{Is it clear what went wrong? yes--PWD}
\end{example}

%\todo[inline]{Left-overs, maybe we'll need this later}

%In terms of the non-rescaled function, we may write
%
%\[ \F_{N}^{bend}(X)
%= \int_{0}^{L} | ( y^{X} )''(t) |^{2} \t
%= \int_{0}^{L} | \lambda(X)^{2} ( \eta^{X} )''( %\lambda(X) t ) |^{2} \t
%= \lambda(X)^{3} \int_{0}^{ \lambda(X) L} | ( \eta^{X} )''( \tau ) |^{2} \ d \tau \]
%

%\[ \F_{N}^{tor}(X)
%= \int_{0}^{L} ( z^{X, \Phi} )'(t)^{2} \t
%= \int_{0}^{L} \Big( \lambda(X) ( \zeta^{X, \Phi} %)'( \lambda(X) t ) \Big)^{2} \t
%= \lambda(X) \int_{0}^{ \lambda(X) L} ( \zeta^{X, %\Phi} )'( \tau )^{2} \ d \tau \]
%

%%%%%%%%%%%%%%%%%%%%%%%%%%%%%%%%%%%%%%%%%%%%%%%%%%%%%%%%%%%%%%%%%%%%%%%%%%%%%%%%%%%%%%%%%%%%%%%%%%%%%%%%%%%%%%%%%%%%%%%%%%
\section{Approximation of a smooth arc-length parametrized curve by a polygonal line}
\label{sect:app}
%%%%%%%%%%%%%%%%%%%%%%%%%%%%%%%%%%%%%%%%%%%%%%%%%%%%%%%%%%%%%%%%%%%%%%%%%%%%%%%%%%%%%%%%%%%%%%%%%%%%%%%%%%%%%%%%%%%%%%%%%%
Let us suppose that we have an arc-length parametrized smooth curve $ u : [0,L] \to \R^{n} $. To obtain a suiteable discretization, for given $N$, we would like to find $N+1$ points on the curve with equal distance \emph{in ambient space}. Optimally, the first and last point of the discretization should correspond to $u(0)$ and $u(L)$, respectively. In the following we show that such a choice of points can always be found.

%%%%%%%%%%%%%%%%%%%%%%%%%%%%%%%%%%%%%%%%%%%%%%%%%%%%%%%%%%%%%%%%%%%%%%%%%%%%%%%%%%%%%%%%%%%%%%%%%%%%%%%%%%%%%%%%%%%%%%%%%%
\subsection{Relation between the arc length $l$ and the line segment $r$}
\label{subsect:relation-l-r}
%%%%%%%%%%%%%%%%%%%%%%%%%%%%%%%%%%%%%%%%%%%%%%%%%%%%%%%%%%%%%%%%%%%%%%%%%%%%%%%%%%%%%%%%%%%%%%%%%%%%%%%%%%%%%%%%%%%%%%%%
First let us take any $ s_{0} \in [0,L] $ and explore the difference between the distance along the curve $l$ and the Euclidean distance $r$ from $ u( s_{0} ) $ to other points on the curve, see Figure~\ref{fig:relation-l-r}.
\begin{figure}[ht!]
\includegraphics[width=\textwidth]{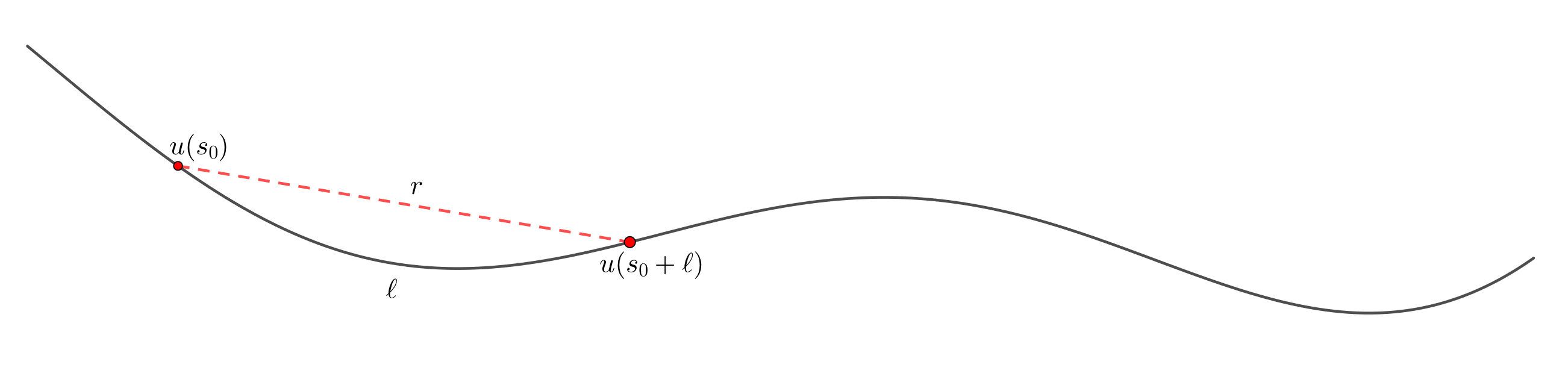}	
\caption{Relation between $l$ and $r$. For our computations, quite fine comparisons of distances between two points on the given arc-length parametrized curve will be needed. $l$ denotes the distance along the curve, while $r$ stands for the Euclidean distance.}
\label{fig:relation-l-r}
\end{figure}

Since the point $ u( s_{0} + l ) $ is $l$ away along the curve,
its Euclidean distance to $ u( s_{0} ) $ is given by
\[ r = | u( s_{0} + l ) - u( s_{0} ) | = \left| \int_{ s_{0} }^{ l + s_{0} }  u'(s) \s \right|. \]
Let us determine the relation $ r = r( s_{0} , l ) $ (and, if possible, $ l = l( s_{0} , r ) $).
We may write
\begin{eqnarray*}
u'(s) 
& = & u'( s_{0} ) + \int_{ s_{0} }^{s} u''( \sigma ) \ d \sigma \\
& = & u'( s_{0} ) + \int_{ s_{0} }^{s} \left( u''( s_{0} ) + \int_{ s_{0} }^{ \sigma } u'''( \tau ) \ d \tau \right) \ d \sigma \\
& = & u'( s_{0} ) + ( s - s_{0} ) u''( s_{0} ) + \int_{ s_{0}}^{s} \int_{ s_{0} }^{ \sigma } u'''( \tau ) \ d \tau \ d \sigma .
\end{eqnarray*}
Since $u$ is arc-length parametrized, we have $ | u'( s_{0} ) | = 1 $ and $ u'( s_{0} ) \perp u''( s_{0} ) $. Therefore,
\begin{eqnarray*}
r( s_{0} , l )
& = & \left| \int_{ s_{0} }^{ l + s_{0} }  u'(s) \s \right| \\
& \ge & \left| u'( s_{0} ) \cdot \int_{ s_{0} }^{ l + s_{0} }  u'(s) \s \right| \\
& =  & \left| \int_{ s_{0} }^{ l + s_{0} } \left( 1 + \int_{ s_{0} }^{s} \int_{ s_{0} }^{ \sigma } u'( s_{0}) \cdot  u'''( \tau ) \ d \tau \ d \sigma \right) \s \right| \\
& =  & \left| l + \int_{ s_{0} }^{ l + s_{0} } \int_{ s_{0} }^{s} \int_{ s_{0} }^{ \sigma }  u'( s_{0}) \cdot u'''( \tau ) \ d \tau \ d \sigma \s \right| \\
& \ge & l - \frac{ l^{3} }{6} \| u''' \|_{\i}.
\end{eqnarray*}

Furthermore, it holds
\begin{eqnarray*}
\partial_{l} r( s_{0} , l )
& = & \frac{ \partial }{ \partial l } \left| \int_{ s_{0} }^{ l + s_{0} }  u'(s) \s \right| \\
& = & \frac{1}{ r( s_{0} , l ) } \int_{ s_{0} }^{ l + s_{0} }  u'(s) \s \cdot u'(l + s_{0})  \\
& = & \frac{1}{ r( s_{0} , l ) } \int_{ s_{0} }^{ l + s_{0} }  \left( u'(l + s_{0}) - \int_{s}^{ l + s_{0} } u''(\sigma) \ d \sigma \right) \s \cdot u'(l + s_{0})  \\
& = & \frac{1}{ r( s_{0} , l ) }  \left( l - \int_{ s_{0} }^{ l + s_{0} } \int_{s}^{ l + s_{0} } u''(\sigma) \ d \sigma \s \cdot u'(l + s_{0}) \right) \\
& \ge & \frac{1}{l} \left(  l -  \frac{ l^{2} }{2} \| u'' \|_{\i} \right) \\
& \ge & 1 -  \frac{l}{2}  \| u'' \|_{\i}.
\end{eqnarray*}
Hence,
\begin{equation}
\label{eq:ass-r-l-s0-1}
l \ge r( s_{0} , l ) \ge l - \frac{ l^{3} }{6} \| u''' \|_{\i} 
\quad \mbox{and} \quad
1 \ge \partial_{l} r( s_{0} , l ) \ge 1 -  \frac{l}{2}  \| u'' \|_{\i}.
\end{equation}  

Note that both assessments are independent of $ s_{0} $.
For sufficiently small $l$ the function $ l \mapsto r(s_{0},l) $ is strictly increasing.
Hence, there exists the function $ r \mapsto l(s_{0},r) $, which is also strictly increasing.

Moreover, there is a constant $C$, dependant on $u$ but not on $ s_{0} $, such that
\begin{equation*}
%\label{eq:ass-r-l-s0-2}
r(s_{0},l) \ge l - C l^{3} 
\quad \mbox{and} \quad
l(s_{0},r) \le r + C r^{3}
\end{equation*} 
for $ r,l $ sufficiently small.

From 
\begin{eqnarray*}
\partial_{ s_{0} } r( s_{0} , l )
& = & \frac{ \partial }{ \partial s_{0} } \left| u( l + s_{0} ) - u(s_{0}) \right| \\
& = & \frac{ u( l + s_{0} ) - u(s_{0}) }{ r( s_{0} , l ) } \cdot \big( u'( l + s_{0} ) - u'(s_{0}) \big)  \\
& = & \frac{ u( l + s_{0} ) - u(s_{0}) }{ r( s_{0} , l ) } \cdot \int_{s_{0}}^{l + s_{0}} u''( \sigma ) \ d \sigma,
\end{eqnarray*}
it follows
\begin{equation}
\label{eq:ass-r-l-s0-3}
| \partial_{ s_{0} } r( s_{0} , l ) |
\le l \| u'' \|_{\i}. 
\end{equation} 
Since
\[ \frac{ \partial l}{ \partial s_{0} } 
= - \frac{ \partial r}{ \partial s_{0} } \frac{ \partial l}{ \partial r } \]
it follows from \eqref{eq:ass-r-l-s0-1} and \eqref{eq:ass-r-l-s0-3} that for small $r$
%
% \todo{I don’t see how this inequality follow}
\begin{equation*}
\left| \frac{ \partial l}{ \partial s_{0} } \right|
= \left| \frac{ \partial r}{ \partial s_{0} } \right| \cdot \left| \frac{ \partial l}{ \partial r } \right| < 1.
\end{equation*}
Therefore, for a fixed small $r$ the function
\begin{equation}
\label{eq:preserving-order}
s_{0} \mapsto s_{0} + l( s_{0} , r ) 
\end{equation} 
is strictly increasing. 
This means that, if we take two points on the curve and find for each of them the next one along the curve that is at the distance $r$, the order is preserved. 

%%%%%%%%%%%%%%%%%%%%%%%%%%%%%%%%%%%%%%%%%%%%%%%%%%%%%%%%%%%%%%%%%%%%%%%%%%%%%%%%%%%%%%%%%%%%%%%%%%%%%%%%%%%%%%%%%%%%%%%%%%
\subsection{Approximation by a polygonal curve}
\label{subsect:app-equi}
%%%%%%%%%%%%%%%%%%%%%%%%%%%%%%%%%%%%%%%%%%%%%%%%%%%%%%%%%%%%%%%%%%%%%%%%%%%%%%%%%%%%%%%%%%%%%%%%%%%%%%%%%%%%%%%%%%%%%%%%%%

Let $ r > 0 $ be arbitrary. Let $ N(r) \in \N \cup \{0\} $ be the largest number of line segments of the length $r$ by which we approximate the curve $u$ starting at $ u(0) $.
More precisely, we set $ s_{0} := 0 $ and $ x_{0} := u(0) $. Then we repeat for $ i = 0,1,2,\ldots $
\begin{itemize} 
\item
If $ | u(s) - u( s_{i} ) | < r $ for all $ s \in [ s_{i} , L ] $, we stop and set $ N(r) := i $.
\item
Otherwise, we set $ s_{i+1} := \min \{ s \in [ s_{i} , L ] : | u(s) - u( s_{i} ) | = r \} $ and $ x_{i+1} := u( s_{i+1} ) $.
Notice that by the denotation in section \ref{subsect:relation-l-r}, 
$ s_{i+1} = s_{i} + l( s_{i} , r ). $
\end{itemize}
(If we write $ s_{i}(r) $, we stress the dependence of this decomposition on $r$.)
In this way we get $ x_{0} , x_{1} , \ldots , x_{N(r)} \in u( [0,L] ) $ such that $ | u( s_{i-1} ) - u( s_{i} ) | = r $ for all $ i = 1 , \ldots , N(r) $ and $ | u(s) - x_{ N(r) } | < r $ for all $ s \in [ s_{ N(r) } , L ] $. 

We notice that the function $ r \mapsto N(r) $ may in general behave strangely and need not be monotone. For small $r$, however, it has the following properties.
%
%\todo[inline]{I have put the properties in a lemma. I hope that's OK.-- perfect. PWD}
%
\begin{lemma}
\label{lemma:N-prop}
Let $u\colon[0,L]\to \R^3$ be a smooth curve and assume $r$ is sufficiently small. We then have the following.
\begin{itemize}
\item The function $ r \mapsto N(r) $, as defined above, is monotonically decreasing for increasing $r$ and left-continuous. 
\item If $\hat{r}$ is a point in the jump set of $ r \mapsto N(r)$, we have $N(\hat{r})-\lim_{\rho\downarrow \hat{r}}N(\rho) =1$.
With $s_i$ defined as above, $ L = s_{N(\hat{r})}(\hat{r}) $.
\end{itemize}
\end{lemma}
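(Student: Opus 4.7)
The plan is to lean on the estimates from Subsection~\ref{subsect:relation-l-r}. For sufficiently small $r$, the function $l \mapsto r(s_0, l)$ is a strictly increasing diffeomorphism with derivative close to $1$, uniformly in $s_0$, and the map $s_0 \mapsto s_0 + l(s_0, r)$ of \eqref{eq:preserving-order} is strictly increasing in $s_0$. Both $l$ and $r$ are jointly continuous in their arguments. I will assume throughout that $r$ is small enough for these facts to hold.

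First I would prove by induction on $i$ that $r \mapsto s_i(r)$ is continuous and strictly increasing wherever defined. The base $s_0 = 0$ is trivial; for the step, given $s_i(r_1) < s_i(r_2)$ for $r_1 < r_2$, the chain
\begin{equation*}
s_{i+1}(r_1) = s_i(r_1) + l(s_i(r_1), r_1) < s_i(r_2) + l(s_i(r_2), r_1) < s_i(r_2) + l(s_i(r_2), r_2) = s_{i+1}(r_2)
\end{equation*}
follows from \eqref{eq:preserving-order} and from monotonicity of $l(\sigma, \cdot)$. Monotonicity of $N$ is then immediate: if $r_1 < r_2$, then $s_{N(r_2)}(r_1) < s_{N(r_2)}(r_2) \le L$ shows $N(r_1) \ge N(r_2)$.

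For left-continuity at $\hat{r}$, set $k := N(\hat{r})$ and suppose toward contradiction that $r_n \uparrow \hat{r}$ with $N(r_n) \ge k + 1$. Continuity of the recursion in $r$ yields $s_{k+1}(r_n) \to s_k(\hat{r}) + l(s_k(\hat{r}), \hat{r}) \le L$, a point at Euclidean distance exactly $\hat{r}$ from $u(s_k(\hat{r}))$. So $s_{k+1}$ would be a valid step at $r = \hat{r}$, contradicting $N(\hat{r}) = k$.

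The hard part is the jump-size claim together with $s_{N(\hat{r})}(\hat{r}) = L$. Suppose $\hat{r}$ is a jump point with $k := N(\hat{r})$ and right limit $k - j$, $j \ge 1$. With $M(\sigma, r) := \max_{s \in [\sigma, L]} |u(s) - u(\sigma)|$, the failure of step $k-j+1$ for $\rho > \hat{r}$ reads $M(s_{k-j}(\rho), \rho) < \rho$; in the limit $M(s_{k-j}(\hat{r}), \hat{r}) = \hat{r}$ (equality because $s_{k-j+1}(\hat{r})$ is defined and realizes distance $\hat{r}$). Since each step covers arc length at least $\hat{r}$ (by $r \le l$), we get $L - s_{k-j}(\hat{r}) \ge j\hat{r}$. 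If $j \ge 2$, the point at arc length $2\hat{r}$ beyond $s_{k-j}(\hat{r})$ lies in $[s_{k-j}(\hat{r}), L]$, and by \eqref{eq:ass-r-l-s0-1} its Euclidean distance from $u(s_{k-j}(\hat{r}))$ is at least $2\hat{r} - \frac{(2\hat{r})^3}{6}\|u'''\|_\infty > \hat{r}$ for $\hat{r}$ sufficiently small, contradicting $M = \hat{r}$. Hence $j = 1$. For $j = 1$, the analogous bound gives $L - s_{k-1}(\hat{r}) < 2\hat{r}$, and on this short interval the derivative estimate $\partial_l r \ge 1 - \frac{l}{2}\|u''\|_\infty > 0$ makes $s \mapsto |u(s) - u(s_{k-1}(\hat{r}))|$ strictly increasing, so its maximum $\hat{r}$ is attained only at $s = L$; thus $s_k(\hat{r}) = L$.
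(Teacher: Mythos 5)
Your proof is correct, and while the first bullet follows essentially the paper's line, your treatment of the second bullet takes a genuinely different route. For monotonicity you make explicit the induction on $i$ that the paper leaves implicit when it deduces \eqref{eq:N-monoton} from the strict monotonicity of $s_0 \mapsto s_0 + l(s_0,r)$ in \eqref{eq:preserving-order} (both you and the paper tacitly assume at this point that $l(s_i(r_1),r_1)$ is defined whenever the comparison step with $r_2$ exists, so this is a shared, fillable gap, not one you introduced). Your left-continuity argument is the contradiction-based mirror image of the paper's direct $\varepsilon$--$\delta$ estimate on $\rho \mapsto \max_{s \in [s_\nu(\rho),L]} |u(s)-u(s_\nu(\rho))|$; one phrasing caveat: writing $s_{k+1}(r_n) \to s_k(\hat r) + l(s_k(\hat r),\hat r)$ presupposes that $l(s_k(\hat r),\hat r)$ exists, which is exactly what you are refuting --- extract instead a convergent subsequence $s_{k+1}(r_n) \to s^\ast \in [s_k(\hat r),L]$ and pass to the limit in $|u(s_{k+1}(r_n)) - u(s_k(r_n))| = r_n$ to reach the same contradiction. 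The genuine divergence is the jump analysis: the paper derives the counting estimate $N(r)r \le L \le (N(r)+1)(r+Cr^3)$, i.e.\ \eqref{eq:size-jump-N}, concludes jump size $1$ from the fact that the resulting interval contains at most two integers, and obtains $s_{N(\hat r)}(\hat r) = L$ contrapositively by proving right-continuity of $N$ at every $r$ with $s_{N(r)}(r) < L$. You instead argue purely locally at the jump: passing $\rho \downarrow \hat r$ in the failure condition yields $\max_{s \in [s_{k-j}(\hat r),L]} |u(s) - u(s_{k-j}(\hat r))| = \hat r$, and the estimate $r(s_0,l) \ge l - \tfrac{l^3}{6}\|u'''\|_\infty$ from \eqref{eq:ass-r-l-s0-1} then caps the remaining arc length below $2\hat r$, while $j$ remaining steps of arc length $\ge \hat r$ each would require $j\hat r$; this excludes $j \ge 2$, and strict monotonicity of the chord length on the short residual piece forces the maximum to be attained only at $s = L$, giving $s_k(\hat r) = L$. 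Your version is attractive because both claims of the second bullet fall out of the single identity $M(s_{k-j}(\hat r)) = \hat r$, making the lemma self-contained; the paper's counting route is coarser pointwise but its bound \eqref{eq:size-jump-N} is recycled later for the quantitative spacing estimates \eqref{eq:l-i-bound}--\eqref{eq:lambda-bound}, which your local argument does not deliver.
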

\begin{proof}
Fix any $ r>0 $ such that for $ 2r $ the estimates from section~\ref{subsect:relation-l-r} are valid. Then the function in \eqref{eq:preserving-order} is strictly increasing, and therefore
\begin{equation}
\label{eq:N-monoton}
\rho \le r \Longrightarrow N( \rho ) \ge N(r).
\end{equation} 
Since $ N(r) $  takes values only in $ \N_{0} $, it is piecewise constant. 

Denote by $ l_{i} := s_{i} - s_{i-1} $ the length of the curve between $ x_{i-1} $ and $ x_{i} $ for $ i = 1 , \ldots , N(r) $ and by $ o $ its length between $ x_{ N(r) } $ and $ u(L) $.
Then we have 
\[ N(r) r \le L = \sum_{i=1}^{N(r)} l_{i} + o \le ( N(r) + 1 ) ( r + C r^{3} ). \]
Hence,
\begin{equation}
\label{eq:size-jump-N}
N(r) \in \left[ \frac{L}{ r + C r^{3} } - 1 , \frac{L}{r} \right].
\end{equation}
The length of this interval is $ 1 + \frac{ C L r }{ 1 + C r^{2} } $ so for all $r$ sufficiently small, we get at most two possible $N$. Therefore, the jumps of the function $ r \mapsto N(r) $ are of size 1.

Denote $ \nu := N(r) $. From \eqref{eq:N-monoton} it follows that for all $ \rho \le r $ the function $ \rho \mapsto s_{\nu}( \rho ) = s_{N(r)} (\rho) $ is well-defined and, being a composition of continuous functions, continuous.
Hence, 
\[ \max_{ s \in [s_{\nu}( r ),L] } | u(s) - u( s_{\nu}( r ) ) | 
= | u(L) - u( s_{\nu}( r ) ) |
= r - \varepsilon \]
for some $ \varepsilon > 0 $.
There exists a $ \delta \in ( 0 , \frac{\varepsilon}{2} ) $ such that $ | u( s_{\nu}( r ) ) - u( s_{\nu}( \rho ) ) | < \frac{\varepsilon}{2} $ for all $ \rho \in ( r - \delta , r ]. $ Therefore, for all such $ \rho $
\[ \max_{ s \in [ s_{\nu}( \rho ) ,L] } | u(s) - u( s_{\nu}( \rho ) ) | 
\le | u( s_{\nu}( r ) ) - u( s_{\nu}( \rho ) ) | + \max_{ s \in [ s_{\nu}(r) ,L] } | u(s) - u( s_{\nu}(r) ) |
< r - \tfrac{\varepsilon}{2}
< \rho, \]
and consequently, $ N(\rho) = \nu = N(r) $. The function $ r \mapsto N(r) $ is thus left-continuous.

Moreover, by \eqref{eq:size-jump-N} there exists a $ r_{1} > r $ such that $ N( \rho ) \ge \nu - 1 $ for all $ \rho \le r_{1} $.
If $ s_{\nu}(r) < L $, then 
\[ \max_{ s \in [s_{\nu-1}( r ),L] } | u(s) - u( s_{\nu-1}( r ) ) | 
= | u(L) - u( s_{\nu-1}( r ) ) |
= r + \varepsilon \]
for some $ \varepsilon \in (0,r) $. 
There exists a $ \delta \in ( 0 , \frac{\varepsilon}{2} ) $ such that $ | u( s_{\nu-1}( \rho ) ) - u( s_{\nu-1}( r ) ) | < \frac{\varepsilon}{2} $ for all $ \rho \in [ r , r + \delta ),$
and so,
\[ \max_{ s \in [ s_{\nu-1}( \rho ) ,L] } | u(s) - u( s_{\nu-1}( \rho ) ) | 
\ge | u(L) - u( s_{\nu-1}(r) ) | - | u( s_{\nu-1}( r ) ) - u( s_{\nu-1}( \rho ) ) |
> r + \tfrac{ \varepsilon }{2}
> \rho. \]
Hence, $ N( \rho ) = \nu = N(r) $. 
Therefore, in such points the function $ r \mapsto N(r) $ is also right-continuous.
\end{proof}
From Lemma~\ref{lemma:N-prop} it follows that for each $N$ sufficiently large we may  uniquely define $ r_{N} $ as 
\begin{equation}
\label{eq:def-rN}
r_{N} := \max \{ r : N(r) = N \}.
\end{equation} 
Moreover, for $ r_{N} $ the corresponding polygonal chain ends precisely in $ u(L) $.

For $ r_{N} $ we can make a more accurate estimate. With $ l_{i} = s_{i} - s_{i-1} $ being again the length of the curve between $ x_{i-1} $ and $ x_{i} $
\begin{equation*}
N r_{N} \le L = \sum_{i=1}^{N} l_{i} \le N ( r_{N} + C r_{N}^{3} ) 
\end{equation*}
or
\begin{equation}
\label{eq:l-i-bound}
r_{N} \le \frac{L}{N} \le r_{N} + C r_{N}^{3}
\quad \mbox{and} \quad
\left| l_{i} - \frac{L}{N} \right| \le C r_{N}^{3}.
\end{equation}
Then also
\begin{equation}
\label{eq:r-N-bound}
\frac{L}{N} - C \left( \frac{L}{N} \right)^{3} \le r_{N} \le \frac{L}{N}. 
\end{equation}
Moreover, by inserting $ \lambda_{N} = \frac{ N r_{N} }{L} $, we arrive at
\begin{equation*}
\frac{ \lambda_{N} L }{N} \le \frac{L}{N} \le \frac{ \lambda_{N} L }{N} + C \left( \frac{ \lambda_{N} L }{N} \right)^{3}.
\end{equation*}
Hence,
\begin{equation*}
\lambda_{N} \le 1 \le \lambda_{N} + C \lambda_{N}^{3} \left( \frac{L}{N} \right)^{2}. 
\end{equation*}
It follows
\begin{equation}
\label{eq:lambda-bound}
\lambda_{N} \ge 1 - C \left( \frac{L}{N} \right)^{2}.
\end{equation}

%%%%%%%%%%%%%%%%%%%%%%%%%%%%%%%%%%%%%%%%%%%%%%%%%%%%%%%%%%%%%%%%%%%%%%%%%%%%%%%%%%%%%%%%%%%%%%%%%%%%%%%%%%%%%%%%%%%%%%%%%%
%%%%%%%%%%%%%%%%%%%%%%%%%%%%%%%%%%%%%%%%%%%%%%%%%%%%%%%%%%%%%%%%%%%%%%%%%%%%%%%%%%%%%%%%%%%%%%%%%%%%%%%%%%%%%%%%%%%%%%%%%%
\section{Recovery sequence}
\label{sect:recovery}
%%%%%%%%%%%%%%%%%%%%%%%%%%%%%%%%%%%%%%%%%%%%%%%%%%%%%%%%%%%%%%%%%%%%%%%%%%%%%%%%%%%%%%%%%%%%%%%%%%%%%%%%%%%%%%%%%%%%%%%%%%
%%%%%%%%%%%%%%%%%%%%%%%%%%%%%%%%%%%%%%%%%%%%%%%%%%%%%%%%%%%%%%%%%%%%%%%%%%%%%%%%%%%%%%%%%%%%%%%%%%%%%%%%%%%%%%%%%%%%%%%%%%
In this section we construct a recovery sequence.
First we state a density result by which we may restrict ourselves to smooth arc-length parametrized framed rods. Then we define a suitable sequence, show that it is indeed an approximation in $L^2$ and then prove consequently convergences of the bending, torsional and penalty terms.

%%%%%%%%%%%%%%%%%%%%%%%%%%%%%%%%%%%%%%%%%%%%%%%%%%%%%%%%%%%%%%%%%%%%%%%%%%%%%%%%%%%%%%%%%%%%%%%%%%%%%%%%%%%%%%%%%%%%%%%%%%
\subsection{Density}
%%%%%%%%%%%%%%%%%%%%%%%%%%%%%%%%%%%%%%%%%%%%%%%%%%%%%%%%%%%%%%%%%%%%%%%%%%%%%%%%%%%%%%%%%%%%%%%%%%%%%%%%%%%%%%%%%%%%%%%%%%
As usual, it is easier to construct a recovery sequence when starting with a smooth object. In our case, we may rely on a result by Hornung \cite[Theorem 4.1]{hornung2020deformation}. For the reader's convenience, we restate this result here in the special case of a framed curve, see \cite[Section 1]{hornung2020deformation}.

\begin{theorem} \label{theo:Hornung}
Let
$ ( u , d_{2} , d_{3} ) \in W^{2,2}((0,L);\R^3) \times W^{1,2}((0,L);\R^3) \times W^{1,2}((0,L);\R^3) $
be a framed curve
and let $ Q \colon \R^{3 \times 3} \to [0,\infty) $ be continuous. Define 
\[ \F( u , d_{2} , d_{3} ) := \int_0^L Q( \mathbf{R}^{\top} \mathbf{R}' ) \ dt 
\quad \mbox{where} \quad
\mathbf{R} = ( u' , d_{2} , d_{3} ). \]
If  $ \F( u , d_{2} , d_{3} ) < \i $, then there exist framed curves $ ( u^{(n)}, d_{2}^{(n)} , d_{3}^{(n)} ) $ which are $ C^{\i} $ on $ [0,L] $ and which satisfy the following:
\begin{itemize}
    \item 
    convergence: $ ( u^{(n)}, d_{2}^{(n)} , d_{3}^{(n)} ) \to ( u, d_{2} , d_{3} ) $ strongly in $ W^{2,2}((0,L);\R^3) \times W^{1,2}((0,L);\R^3) \times W^{1,2}((0,L);\R^3) $ as $ n \to \i $,
    \item 
    convergence of energies: $ \F( u^{(n)}, d_{2}^{(n)} , d_{3}^{(n)} ) \to \F( u, d_{2} , d_{3} ) $ as $ n \to \i $,
    \item 
    boundary conditions: $ ( u^{(n)}, d_{2}^{(n)} , d_{3}^{(n)} ) \in ( u, d_{2} , d_{3} ) + W_{0}^{2,2}((0,L);\R^3) \times W_{0}^{1,2}((0,L);\R^3) \times W_{0}^{1,2}((0,L);\R^3) $ for all $ n \in \N $,
    \item 
    structure:  if $ d_{i} \cdot d_{j}'= 0 $ on $ (0,L) $ , then $ d_{i}^{(n)} \cdot (d_{j}^{(n)})'= 0 $ on $ (0,L) $ for all $ n \in \N $, denoting $u'$ by $d_1$ and ${u^{(n)}}'$ by $d_{1}^{(n)}$.
\end{itemize}
\end{theorem}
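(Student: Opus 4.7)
The plan is to parameterize framed curves by an $L^{2}$-vector field, turning the density problem on the nonlinear manifold $\SO(3)$ into a standard density problem in a linear Bochner space. Given a framed curve $(u,d_{2},d_{3})$ with $\F(u,d_{2},d_{3}) < \i$, set $\mathbf{R} := (u', d_{2}, d_{3}) \in W^{1,2}((0,L); \SO(3))$ and let $\omega \in L^{2}((0,L); \R^{3})$ be the axial vector of $\mathbf{R}^{\top} \mathbf{R}'$, so that $\mathbf{R}^{\top} \mathbf{R}' = \hat{\omega}$. The framed curve is uniquely reconstructed from the triple $(u(0), \mathbf{R}(0), \omega)$: $\mathbf{R}$ is the solution of the linear ODE $\mathbf{R}' = \mathbf{R}\hat{\omega}$, which stays in $\SO(3)$ automatically since $\hat{\omega} \in \mathfrak{so}(3)$, and $u(t) = u(0) + \int_{0}^{t} d_{1}(\sigma)\, d\sigma$. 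Hence it suffices to produce smooth approximations of $\omega$ in $L^{2}$; both the $\SO(3)$-constraint and the $W^{1,2}$-regularity of the approximating frames $\mathbf{R}^{(n)}$ come for free.

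The basic construction is mollification of $\omega$. Letting $\rho_{n}$ be a smooth mollifier (applied after a suitable extension of $\omega$ outside $(0,L)$), set $\omega^{(n)} := \rho_{n} \ast \omega$. Let $\mathbf{R}^{(n)}$ be the solution of $(\mathbf{R}^{(n)})' = \mathbf{R}^{(n)} \hat{\omega}^{(n)}$ with $\mathbf{R}^{(n)}(0) = \mathbf{R}(0)$, and $u^{(n)}(t) := u(0) + \int_{0}^{t} d_{1}^{(n)}(\sigma)\, d\sigma$. A Gr\"onwall estimate in the spirit of the proof of Lemma~\ref{lem:frame_conv} gives $\mathbf{R}^{(n)} \to \mathbf{R}$ uniformly and strongly in $W^{1,2}$, and hence $u^{(n)} \to u$ in $W^{2,2}$. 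Since $\hat{\omega}^{(n)} \to \hat{\omega}$ strongly in $L^{2}$ and $Q$ is continuous, a standard dominated-convergence / a.e.-subsequence argument yields the convergence of energies $\F(u^{(n)}, d_{2}^{(n)}, d_{3}^{(n)}) \to \F(u, d_{2}, d_{3})$.

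The main obstacle — the step that gives this density result content beyond the immediate mollification — is enforcing the boundary-value conditions at both endpoints \emph{exactly}, not merely in the limit, so that $(u^{(n)} - u,\, d_{2}^{(n)} - d_{2},\, d_{3}^{(n)} - d_{3}) \in W_{0}^{2,2} \times W_{0}^{1,2} \times W_{0}^{1,2}$. My plan is a boundary-layer correction: fix a sequence $\delta_{n} \downarrow 0$ with $\delta_{n} \gg n^{-1}$, use the plain mollification on $[\delta_{n}, L - \delta_{n}]$, and on a thin transition layer $[L - \delta_{n}, L]$ replace $\omega^{(n)}$ by a smooth perturbation designed to drive $\bigl(\mathbf{R}^{(n)}(L),\, u^{(n)}(L)\bigr)$ to the exact targets $(\mathbf{R}(L), u(L))$. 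Because the end-value map $\omega^{(n)}|_{[L-\delta_n,L]} \mapsto (\mathbf{R}^{(n)}(L), u^{(n)}(L)) \in \SO(3) \times \R^{3}$ is smooth and submersive at the reference datum (its linearization is the standard variation-of-constants operator, whose range is all of $\mathfrak{so}(3) \oplus \R^{3}$ already after an arbitrarily short time), the implicit function theorem yields an admissible correction whose $L^{2}$-norm tends to $0$, preserving $\omega^{(n)} \to \omega$ in $L^{2}$. A symmetric correction, or alternatively a careful choice of extension of $\omega$, handles the left endpoint.

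The structural condition $d_{i} \cdot d_{j}' \equiv 0$ translates, via $(\mathbf{R}^{\top} \mathbf{R}')_{ij} = d_{i} \cdot d_{j}'$, into the vanishing of a specific component of $\omega$. Mollification is componentwise, so an identically zero component remains identically zero; the boundary-layer correction can then be restricted to the complementary components of the axial vector, which is compatible with the target endpoint values since the original $\mathbf{R}$ already satisfies the same restriction. Together these steps deliver simultaneously $C^{\infty}$-regularity, convergence in the appropriate Sobolev norms, convergence of $\F$, the exact boundary conditions, and the preservation of structural orthogonality claimed in the theorem.
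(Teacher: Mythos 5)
The paper does not prove this statement at all: it is quoted verbatim from Hornung \cite[Theorem 4.1]{hornung2020deformation}, so your attempt must be measured against Hornung's proof, whose key ingredients the paper recalls in Remark \ref{rem:straight_ends}. Your reduction (encode the framed curve by the axial vector $\omega$ of $\mathbf{R}^{\top}\mathbf{R}'$, mollify, reconstruct by solving $\mathbf{R}'=\mathbf{R}\hat{\omega}$, then correct the right endpoint) is indeed the skeleton of that proof, but your central step is false: the end-value map is \emph{not} submersive at a general reference datum, and in particular not for perturbations supported in an arbitrarily short terminal interval. Concretely, suppose $\omega\equiv 0$ on $[L-\delta,L]$, so the curve is straight there with constant frame $\mathbf{R}(L)$. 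For a perturbation direction $\mu$ supported in $[L-\delta,L]$, the linearized frame variation is $V=\mathbf{R}\hat{A}$ with $A(t)=\int_{L-\delta}^{t}\mu$, and the linearized position variation is
\begin{equation*}
\delta u(L)=\int_{L-\delta}^{L}\hat{A}(\sigma)\,\mathbf{R}(L)^{\top}\mathbf{R}(L)e_{1}\,d\sigma
=\mathbf{R}(L)\Bigl(\Bigl(\int_{L-\delta}^{L}A(\sigma)\,d\sigma\Bigr)\times e_{1}\Bigr)\perp d_{1}(L),
\end{equation*}
so the tangential direction at the endpoint is missing and the range of the linearization is a proper subspace of $\mathfrak{so}(3)\oplus\R^{3}$. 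The failure is not just infinitesimal: since $|u'|\equiv 1$ forces $|u(L)-u(L-\delta)|\le\delta$ with equality exactly for the straight segment, the reference endpoint lies on the boundary of the reachable set, and no implicit-function-theorem correction can exist. This degenerate situation is precisely the technical core of Hornung's paper: he shows (cf.\ \cite[Proposition 3.3, Lemma 4.2]{hornung2020deformation}, as summarized in Remark \ref{rem:straight_ends}) that unless the whole curve is a straight line there is some \emph{fixed interior} non-degenerate interval $J$, and his Theorem 3.2 produces a finite-dimensional space of smooth corrections supported in $J$ --- not in a shrinking boundary layer --- while the wholly degenerate (straight) case is handled separately by an explicit construction. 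Your structural clause makes matters worse, not better: restricting corrections to ``complementary components'' of $\omega$ means perturbing in a proper subspace $V\subsetneq\mathfrak{so}(3)$, for which degeneracy can occur even on non-straight curves, so the restricted submersivity you assert is exactly the point that requires Hornung's degeneracy analysis and that your proposal omits.

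A secondary gap: for energy convergence you invoke dominated convergence, but $Q$ is only assumed continuous, with no growth bound, so there is no dominating function; $\hat{\omega}^{(n)}\to\hat{\omega}$ in $L^{2}$ and a.e.\ along a subsequence does not imply $\int_{0}^{L}Q(\hat{\omega}^{(n)})\,dt\to\int_{0}^{L}Q(\hat{\omega})\,dt$. (For the quadratic $Q$ actually used in this paper one can dominate mollifications by the Hardy--Littlewood maximal function of $\omega$, but the theorem as stated claims all continuous $Q\ge 0$, and the corrected sequence is in any case no longer a plain mollification.) Finally, even away from exactly straight ends, your corrections live on intervals $[L-\delta_{n},L]$ with $\delta_{n}\downarrow 0$, so you would additionally need uniform-in-$n$ invertibility of the linearization against the mollification error --- a quantitative statement your proposal neither formulates nor proves.
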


\begin{remark} \label{rem:straight_ends}
Let us stress that setting physically reasonable boundary conditions requires that we can approximate a framed arc-length parameterized curve with clamped ends by smooth ones with the same frames at their ends. By Theorem~\ref{theo:Hornung} such an approximation exists. In fact, it is even possible to choose approximating curves that are straight with constant frames on some neighborhoods of each end, as we explain below.

Let us take any of the approximating smooth framed curves $ ( u^{(n)} , d^{(n)}_2 , d^{(n)}_3 ) $ from Theorem~\ref{theo:Hornung}.
For the sake of simplicity, we write 
\[ (U,D_2,D_3) = ( u^{(n)} , d^{(n)}_2 , d^{(n)}_3 ), \quad
\mathbf{R} = ( U', D_2 , D_3 )
\quad \mbox{and} \quad
\mathbf{W} = \mathbf{R}^{\top} \mathbf{R}'.\]
We employ the results regarding degeneracy of matrix-valued maps, see \cite[Definition 3.1]{hornung2020deformation} for the definition. 
If $ \mathbf{W} $ is degenerate with respect to $ \mathfrak{so}(3) $ (the space of skew-symmetric matrices) on every subinterval $ J \subset [0,L] $, then according to \cite[Lemma 4.2]{hornung2020deformation} it is degenerate on the whole $ (0,L) $.
By \cite[Proposition 3.3 (ii)]{hornung2020deformation}, it follows $ W_{12} = W_{13} = 0 $ on $ (0,L) $, which implies $ U'' = 0 $, i.e., the curve is simply a straight line. In this case, an appropriately chosen torsion angles will yield the desired boundary behavior.

Now consider the non-degenerate case $ U'' \ne 0 $.
For every $k \in \N$ we take a function 
\[ \varphi_{k} \in C_{c}^{\i}([\tfrac{1}{k},L-\tfrac{1}{k}]) 
\quad \mbox{such that} \quad 
0 \le \varphi_{k} \le 1 
\quad \mbox{and} \quad
\varphi_{k} = 1 \mbox{ on } [\tfrac{2}{k},L-\tfrac{2}{k}], \]
and define
$ \mathbf{W}_{k} = \varphi_{k} \mathbf{R}^{\top} \mathbf{R}'. $
Then $ \mathbf{W}_{k} \to \mathbf{W} $ strongly in $ L^{2} $ as $ k \to \i $.
Since $ \mathbf{W}_{k} $ are skew-symmetric, there exist unique solutions $ \mathbf{R}_{k} \in C^{\i}([0,L]; \mathrm{SO}(3) ) $ to 
\[ \mathbf{R}_{k}' =  \mathbf{R}_{k} \mathbf{W}_{k}, \quad \mathbf{R}_{k}(0) = \mathbf{R}(0). \]
Let us introduce also the related smooth curves $ U_{k} $ such that by $ U_{k}' $ is the first column of $ \mathbf{R}_{k} $ and $ U_{k}(0) = U(0) $. By \cite[Lemma 3.4]{hornung2020deformation}, 
\[  \mathbf{R}_{k} \to \mathbf{R} \mbox{ in } W^{1,2}, \quad \mbox{and consequently,} \quad U_{k} \to U \mbox{ in } W^{2,2}.\]
Since $ \mathbf{W}_{k} = 0 $ on $ [0,\frac{1}{k}] \cup [L-\frac{1}{k},L] $, the frame $ \mathbf{R}_{k} $ is constant on these intervals.
However, possibly $ U_{k}(L) \ne U(L) $ and $ \mathbf{R}_{k}(L) \ne \mathbf{R}(L) $.

By the consideration above, since $ U'' \ne 0 $, there exists a non-empty open interval $ J \subset (0,L) $ such that $ \mathbf{R} $ is not degenerate with respect to $ \mathfrak{so}(3) $ on $J$. 
Since $ C^{\i}_{c}( J ; \mathfrak{so}(3) ) $ is a dense subspace of 
$ \{ \hat{ \mathbf{W} } \in L^{2}( (0,L) ; \mathfrak{so}(3) ) : \mathrm{supp} \hat{ \mathbf{W} } \subset J \} $, 
we may apply \cite[Theorem 3.2]{hornung2020deformation}.
There exist
\begin{itemize}
    \item a finite dimensional subspace $ E $ of $ C^{\i}_{c}( J ; \mathfrak{so}(3) ) $ and
    \item for each $ k \in \N $ a function $ \hat{ \mathbf{W} }_{k}  \in E $
\end{itemize}
such that $ \hat{ \mathbf{W} }_{k} \to 0 $ as $ k \to \i $ and that the framed smooth curves $ \hat{U}_{k} $, determined by 
\[ \hat{ \mathbf{R} }_{k}' =  \hat{ \mathbf{R} }_{k} ( \mathbf{W}_{k} + \hat{ \mathbf{W} }_{k} ),
\quad \hat{ \mathbf{R} }_{k}(0) = \mathbf{R}(0),
\quad \hat{U}_{k}(0) = U(0) \]
analogously as above, fulfill also
\[ \hat{ \mathbf{R} }_{k}(L) = \mathbf{R}(L) 
\quad \mbox{and} \quad 
\hat{U}_{k}(L) = U(L). \]
Since $ \mathbf{W}_{k} + \hat{ \mathbf{W} }_{k} \to \mathbf{W} $ in $ L^2 $ as $ k \to \i $,
we also have $ \hat{U}_{k} \to U $ in $ W^{2,2} $ and $ \hat{ \mathbf{R} }_{k} \to \mathbf{R} $ in $ W^{1,2} $. 
For $k$ large enough we may choose $ J \subset ( \frac{1}{k},L-\frac{1}{k} ) $. Hence, $ \hat{U}_{k} $ are for these $k$ straight on some neighborhoods of both ends with constant frames.

We need to comment also on the convergence of the energies. %given by
%
%\[ \F( u , d_{2} , d_{3} ) 
%= \int_{0}^{L} Q( u' \cdot d_{2}' , d_{2} \cdot d_{3}' , u' \cdot d_{3}' ) \ dt.
%\]
%
%We may rewrite the energy in an equivalent form as a functional on $ W^{1,2}( (0,L) ; \SO(3) ) $ as
%
%\[ \tilde{\F}( \mathbf{R} ) 
%= \int_{0}^{L} \tilde{Q}( \mathbf{R}^{\top} \mathbf{R}' ) \ dt
%= \int_{0}^{L} \tilde{Q}( \mathbf{W} ) \ dt. \]
%
Since $ \mathbf{W} $ is smooth, its values on $ [0,L] $ lie in some closed ball $K$, and by the definition, the same holds for $ \mathbf{W}_{k} $.
Since $ Q $ is continuous, it is uniformly continuous on $K$, and therefore
\[ \left| \int_{0}^{L} Q( \mathbf{R}_{k}^{\top} \mathbf{R}_{k}' ) \ dt - \int_{0}^{L} Q( \mathbf{R}^{\top} \mathbf{R}' ) \ dt \right|
\le \int_{0}^{L} | \tilde{Q}( \mathbf{W}_{k} ) - \tilde{Q}( \mathbf{W} ) | \ dt
\le \int_{0}^{L} C | \varphi_{k} \mathbf{W} - \mathbf{W} | \ dt
\to 0. \]
\end{remark}

%%%%%%%%%%%%%%%%%%%%%%%%%%%%%%%%%%%%%%%%%%%%%%%%%%%%%%%%%%%%%%%%%%%%%%%%%%%%%%%%%%%%%%%%%%%%%%%%%%%%%%%%%%%%%%%%%%%%%%%%%%
%%%%%%%%%%%%%%%%%%%%%%%%%%%%%%%%%%%%%%%%%%%%%%%%%%%%%%%%%%%%%%%%%%%%%%%%%%%%%%%%%%%%%%%%%%%%%%%%%%%%%%%%%%%%%%%%%%%%%%%%%%
\subsection{Construction of the sequence}
%%%%%%%%%%%%%%%%%%%%%%%%%%%%%%%%%%%%%%%%%%%%%%%%%%%%%%%%%%%%%%%%%%%%%%%%%%%%%%%%%%%%%%%%%%%%%%%%%%%%%%%%%%%%%%%%%%%%%%%%%%
%%%%%%%%%%%%%%%%%%%%%%%%%%%%%%%%%%%%%%%%%%%%%%%%%%%%%%%%%%%%%%%%%%%%%%%%%%%%%%%%%%%%%%%%%%%%%%%%%%%%%%%%%%%%%%%%%%%%%%%%%%
By Theorem~\ref{theo:Hornung} we may restrict ourselves to the smooth case. Again, employing the Bishop frame, let us take any pair $ ( u , \theta ) \in C^{\i}( [0,L] ; \R^{3} ) \times C^{\i}( [0,L] ) $ with $ |u'| \equiv 1 $.
We first approximate the curve $u$ by a polygonal curve with $N$ line segments of the same length.
Let us fix a sufficiently large $ N $  so that there exists $ r_{N} $ as in \eqref{eq:def-rN}. 
For this $ r_{N} $, we employ the construction from subsection~\ref{subsect:app-equi} to obtain
\[ 0 = s_{0}^{N} < s_{1}^{N} < \ldots < s_{N}^{N} = L 
\quad \mbox{such that} \quad
| u( s_{i}^{N} ) - u( s_{i-1}^{N} ) | = r_{N}
\mbox{ for all $ i = 1 , \ldots , N $.} \]
This construction is illustrated in Figure~\ref{fig:equi-polygon}.
\begin{figure}[ht!]
\includegraphics[width=.9\textwidth]{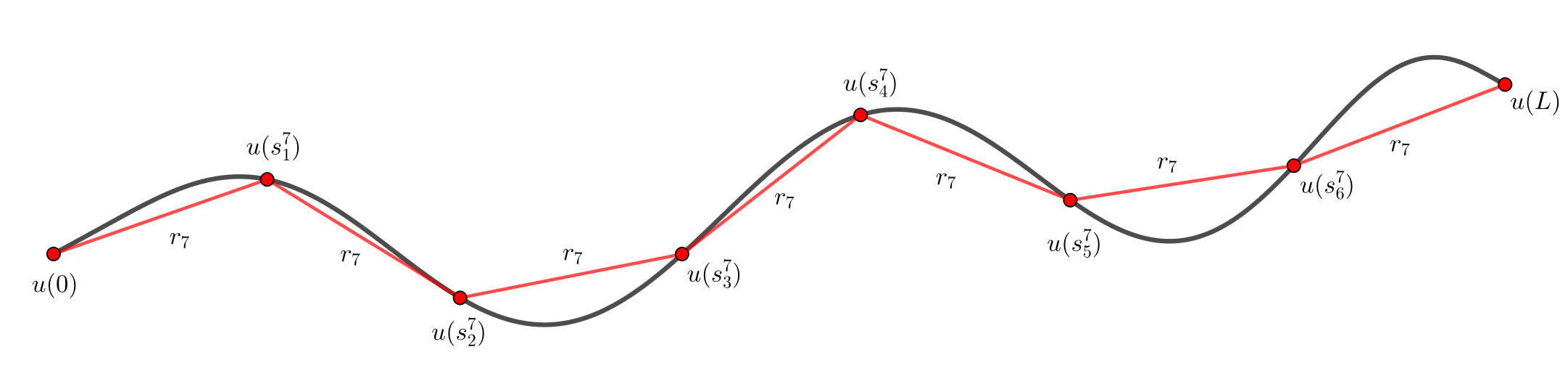}	
\caption{In subsection~\ref{subsect:app-equi} we constructed for each $N$ sufficiently large an approximation of the curve $u$ by a polygonal line with the same endpoints having $N$ segments of equal length $r_{N}$. In the picture, a situation with $ N = 7 $ segments is shown.}
\label{fig:equi-polygon}
\end{figure}

We define the related framed discrete rod $ ( X_{N} , \Phi_{N} ) \in ( \R^{3} )^{N+1} \times \R^{N} $ as 
\[ X_{N} := ( x_{0}^{N} , x_{1}^{N} , \ldots , x_{N}^{N} ) 
\quad \mbox{where} \quad
x_{i}^{N} := u( s_{i}^{N} )
\quad
\mbox{for all $ i = 0 , \ldots , N $} \]
and
\[ \Phi_{N} := ( \varphi_{1}^{N} , \ldots , \varphi_{N}^{N} ) 
\quad \mbox{where} \quad
\varphi_{i}^{N} := \theta( \tfrac{ s_{i-1}^{N} + s_{i}^{N} }{2}  )
\quad
\mbox{for all $ i = 1 , \ldots , N $}. \]
Moreover, define correspondingly, as in subsection~\ref{subsect:assign-spline}, also $ \tau_{i}^{N} $ as in \eqref{eq:def-tau}, $ \lambda_{N} := \lambda( X_{N} ) $, $ t^{N}_{i} := \lambda_{N}^{-1} \tau^{N}_{i} $, $ \eta_{N} := \eta^{ X_{N} } $, $ y_{N} := y^{ X_{N} } $, $ \zeta_{N} := \zeta^{ X_{N} , \Phi_{N} } $ and $ z_{N} := z^{ X_{N} , \Phi_{N} } $.
Notice that
\[ \ell( X_{N} ) = \lambda_{N} L = N r_{N}, \quad
\tau_{i}^{N} = (i-\tfrac{1}{2}) r_{N}, \quad
t_{i}^{N} = (i-\tfrac{1}{2}) \tfrac{L}{N}. \]
We claim that $ ( y_{N} , z_{N} ) $ is a recovery sequence for $ ( u , \theta ) $.

For the definition of $ \eta_{N} $, see \eqref{eq:def-eta}, we have to determine the cubic functions $ S^{N}_{i} $, $ i = 1, \ldots, N-1 $. 
Since for every $ i \in \{ 1 , \ldots , N-1 \} $ 
\[ \tau^{N}_{i+1} - \tau^{N}_{i} = | x^{N}_{i+1} - x^{N}_{i} | = r_{N}, \]
it has to fulfil
\[ S^{N}_{i}(0) = \overline{ x^{N}_{i} }, \quad
S^{N}_{i}( r_{N} ) = \overline{ x^{N}_{i+1} }, \quad
( S^{N}_{i} )'(0) = \frac{ x^{N}_{i} - x^{N}_{i-1} }{ r_{N} }, \quad
( S^{N}_{i} )'( r_{N} ) = \frac{ x^{N}_{i+1} - x^{N}_{i} }{ r_{N} }. \]
As the conditions \eqref{eq:spline-1}, \eqref{eq:spline-2} and \eqref{eq:spline-3} are fulfilled, we arrive at
\begin{equation}
\label{eq:quadratic-spline}
    S^{N}_{i}(\tau) = \frac{ x^{N}_{i+1} - 2 x^{N}_{i} + x^{N}_{i-1} }{ 2 r_{N}^{2} } \ \tau^{2} + \frac{ x^{N}_{i} - x^{N}_{i-1} }{ r_{N} } \ \tau + \overline{ x^{N}_{i} }.
\end{equation}
Let us denote
\begin{equation*}
\Delta_{ r_{N} } x^{N}_{i} := \frac{ x^{N}_{i+1} - 2 x^{N}_{i} + x^{N}_{i-1} }{ r_{N}^{2} }.
\end{equation*} 
Then
\begin{equation}
\label{eq:eta-prime-prime}
\eta_{N}''( \tau ) 
= \left\{ \begin{array}{cl}
0 , & \tau \le \tau_{1}, \\
\Delta_{ r_{N} } x^{N}_{i}, & \tau_{i} \le \tau \le \tau_{i+1} \mbox{ for some } i \in \{ 1 , \ldots, N-1 \}, \\
0, & \tau_{N} \le \tau.
\end{array} \right.
\end{equation} 

%%%%%%%%%%%%%%%%%%%%%%%%%%%%%%%%%%%%%%%%%%%%%%%%%%%%%%%%%%%%%%%%%%%%%%%%%%%%%%%%%%%%%%%%%%%%%%%%%%%%%%%%%%%%%%%%%%%%%%%%%%
\subsection{Convergence $ ( y_{N} , z_{N} ) \to ( u , \theta ) $ in $ L^{2} $}
%%%%%%%%%%%%%%%%%%%%%%%%%%%%%%%%%%%%%%%%%%%%%%%%%%%%%%%%%%%%%%%%%%%%%%%%%%%%%%%%%%%%%%%%%%%%%%%%%%%%%%%%%%%%%%%%%%%%%%%%%%
Clearly $ y_{N}(0) = u(0) $ and $ y_{N}(L) = u(L) $. Let us additionally denote $ t^{N}_{0} := 0 $ and $ t^{N}_{N+1} := L $. 

Let us fix arbitrary $ s \in (0,L) $ and estimate $ | y_{N}(s) - u(s) | $ and $ | z_{N}(s) - \theta(s) | $.
There exists a unique $ i \in \{ 0 , \ldots , N \} $ such that $ s \in [ t^{N}_{i} , t^{N}_{i+1} ) $. 

For the first expression, let us split
\[ | y_{N}(s) - u(s) | 
\le | y_{N}(s) - x^{N}_{i} | + | u(s) - x^{N}_{i}   |. \]
Since $ x^{N}_{i} = u( s^{N}_{i} ) $ and
\[ 0 = t^{N}_{0} = s^{N}_{0} < t^{N}_{1} < s^{N}_{1} < \ldots < t^{N}_{N} < s^{N}_{N} = t^{N}_{N+1} = L,\]
we have
\[ | u(s) - x^{N}_{i} |= | u(s) - u( s^{N}_{i} ) | \le \| u' \|_{\i} | s - s^{N}_{i} | \le \| u' \|_{\i} \frac{L}{N}. \]
If $ 1 \le i \le N-1 $, we have $ y_{N}(s) = S^{N}_{i}( \lambda_{N} s - \tau^{N}_{i}  ) $.
By \eqref{eq:quadratic-spline} for $ \tau \in [0,r_{N}] $,
\begin{eqnarray*}
| S^{N}_{i}( \tau  ) - x^{N}_{i} | 
& = & \left| \frac{ x^{N}_{i+1} - 2 x^{N}_{i} + x^{N}_{i-1} }{ 2 r_{N}^{2} } \ \tau^{2} + \frac{ x^{N}_{i} - x^{N}_{i-1} }{ r_{N} } \ \tau + \frac{ - x^{N}_{i} + x^{N}_{i-1} }{2} \right| \\
& \le & \frac{ | x^{N}_{i+1} - x^{N}_{i} | + | - x^{N}_{i} + x^{N}_{i-1} |}{ 2 r_{N}^{2} } \ \tau^{2} + \frac{ | x^{N}_{i} - x^{N}_{i-1} | }{ r_{N} } \ \tau + \frac{ | - x^{N}_{i} + x^{N}_{i-1} | }{2} \\
& = & \frac{ 2 r_{N} }{ 2 r_{N}^{2} } \ \tau^{2} + \frac{ r_{N} }{ r_{N} } \ \tau + \frac{ r_{N} }{2} \\
& \le & \frac{5}{2} r_{N}.
\end{eqnarray*}
If $i=0$, then by \eqref{eq:def-eta}
\[ | y_{N}(s) - x^{N}_{0} | 
= | \eta_{N}( \lambda_{N} s ) - x^{N}_{0} | 
= \left| x^{N}_{0} + \lambda_{N} s \frac{ x^{N}_{1} - x^{N}_{0} }{ | x^{N}_{1} - x^{N}_{0} | } - x^{N}_{0} \right|
\le \frac{ r_{N} }{2}, \]
and analogously for $i=N$. Hence
\[ | y_{N}(s) - u(s) | 
\le \| u' \|_{\i} \frac{L}{N} + \frac{5}{2} r_{N}. \]

Now the second expression. If $ i \not\in \{ 0 , N \} $, we have
\begin{eqnarray*}
| z_{N}(s) - \theta(s) | 
& = & \left| \varphi^{N}_{i} + \frac{ \varphi^{N}_{i+1} - \varphi^{N}_{i} }{ t^{N}_{i+1} - t^{N}_{i} }( s - t^{N}_{i} ) - \theta(s) \right| \\
& \le & | \varphi^{N}_{i+1} - \varphi^{N}_{i} | + | \varphi^{N}_{i}  - \theta(s) | \\
& \le & | \theta( \tfrac{ s_{i}^{N} + s_{i+1}^{N} }{2}  ) - \theta( \tfrac{ s_{i-1}^{N} + s_{i}^{N} }{2}  ) | + | \theta( \tfrac{ s_{i-1}^{N} + s_{i}^{N} }{2}  )  - \theta(s) | \\
& \le & \| \theta' \|_{L^{\infty}} \frac{ s_{i+1}^{N} - s_{i-1}^{N} }{2} 
+ \| \theta' \|_{L^{\infty}} \left| \frac{ s_{i-1}^{N} + s_{i}^{N} }{2}  - s \right| \\
& \le & \| \theta' \|_{L^{\infty}} \left( \frac{ 3 L }{2 N } 
+  \frac{L}{N} \right)  \\
&  =  & \| \theta' \|_{L^{\infty}}  \frac{ 5 L }{2 N }.
\end{eqnarray*} 
It can be easily seen that the same bound is valid also for the remaining two values of $i$.

Thus we have shown that $ y_{N} \to u $ and $ z_{N} \to \theta $ uniformly $u$ as $ N \to \i $.

%%%%%%%%%%%%%%%%%%%%%%%%%%%%%%%%%%%%%%%%%%%%%%%%%%%%%%%%%%%%%%%%%%%%%%%%%%%%%%%%%%%%%%%%%%%%%%%%%%%%%%%%%%%%%%%%%%%%%%%%%%
\subsection{Convergence of bending energy}
\label{subsect:bend}
%%%%%%%%%%%%%%%%%%%%%%%%%%%%%%%%%%%%%%%%%%%%%%%%%%%%%%%%%%%%%%%%%%%%%%%%%%%%%%%%%%%%%%%%%%%%%%%%%%%%%%%%%%%%%%%%%%%%%%%%
Let us now show that $ \| y_{N}'' \|_{L^{2}} \to \| u'' \|_{L^{2}}. $
\iffalse
Therefore, let us first look at $ y_{N}'' $.
In the definition of the related funtion $ \eta_{N} $, see \eqref{eq:def-eta}, we have to determine the cubic functions $ S_{i} $, $ i = 1, \ldots, N $. 
Since for every $ i \in \{ 1 , \ldots , N \} $ 
%
\[ \tau^{N}_{i} - \tau^{N}_{i-1} = | x^{N}_{i} - x^{N}_{i-1} | = r_{N}, \]
%
it has to fulfil
%
\[ S_{i}(0) = \overline{ x^{N}_{i} }, \quad
S_{i}( r_{N} ) = \overline{ x^{N}_{i+1} }, \quad
S_{i}'(0) = \frac{ x^{N}_{i} - x^{N}_{i-1} }{ r_{N} }, \quad
S_{i}'( r_{N} ) = \frac{ x^{N}_{i+1} - x^{N}_{i} }{ r_{N} }. \]
%
As the conditions \eqref{eq:spline-1}, \eqref{eq:spline-2} and \eqref{eq:spline-3} are fulfilled, we arrive at
%
\[ S_{i}(\tau) = \frac{ x^{N}_{i+1} - 2 x^{N}_{i} + x^{N}_{i-1} }{ 2 r_{N}^{2} } \ \tau^{2} + \frac{ x^{N}_{i} - x^{N}_{i-1} }{ r_{N} } \ \tau + \overline{ x^{N}_{i} }. \]
%
%
Let us denote
%
\begin{equation}
\Delta_{ r_{N} } x^{N}_{i} := \frac{ x^{N}_{i+1} - 2 x^{N}_{i} + x^{N}_{i-1} }{ r_{N}^{2} }.
\end{equation} 
%
Then
%
\begin{equation}
\eta_{N}''( \tau ) 
= \left\{ \begin{array}{cl}
0 , & \tau \le \tau_{1} \\
\Delta_{ r_{N} } x^{N}_{i}, & \tau_{i-1} \le \tau \le \tau_{i} \mbox{ for some } i \in \{ 1 , \ldots, N \} \\
0, & \tau_{N} \le \tau.
\end{array} \right.
\end{equation} 
%
\fi
%
From \eqref{eq:eta-prime-prime}, it follows
\begin{equation*}
\| y_{N}'' \|_{ L^{2} }  
= \lambda_{N}^{3} \int_{0}^{ \lambda_{N} L} | ( \eta^{X} )''( \tau ) |^{2} \ d \tau 
= \lambda_{N}^{3} \sum_{i=1}^{N-1} r_{N} | \Delta_{ r_{N} } x^{N}_{i} |^{2}.
\end{equation*}
By employing $ N r_{N} = \lambda_{N} L $, we get
\begin{equation*}
\| y_{N}'' \|_{ L^{2} }  
= \lambda_{N}^{3} r_{N} \sum_{i=1}^{N-1} \left| \frac{ u( s^{N}_{i-1} ) + u( s^{N}_{i+1} ) - 2 u( s^{N}_{i} ) }{ r_{N}^{2} } \right|^{2}
= \frac{L}{N} \sum_{i=1}^{N-1} \left| \frac{ u( s^{N}_{i-1} ) + u( s^{N}_{i+1} ) - 2 u( s^{N}_{i} ) }{ ( \frac{L}{N} )^{2} } \right|^{2}.
\end{equation*}
This expression resembles closely the approximation of $ \| u''\|_{ L^{2} }^{2} $ via equidistant partition of the interval
\begin{equation*} 
{\B}_{N}( u ) 
= \frac{L}{N} \sum_{i=1}^{N-1} \left| \frac{ u( (i-1) \frac{L}{N} ) + u( (i+1) \frac{L}{N} ) - 2 u( i \frac{L}{N} ) }{ ( \frac{L}{N} )^{2} } \right|^{2},
\end{equation*}
see, e.g., \cite{braides2000discrete}. Knowing that $ {\B}_{N}( u ) \to \| u''\|_{ L^{2} }^{2} $, our goal is to prove
\[ | \| y_{N}'' \|_{ L^{2} } - {\B}_{N}( u ) | \to 0. \]
For each $i$ we may write
\begin{equation*} 
u( s^{N}_{i} ) - u( i \tfrac{L}{N} ) 
= u'( i \tfrac{L}{N} ) \big( t^{N}_{i} - i \tfrac{L}{N} \big) + u''( \zeta^{N}_{i} ) \big(  t^{N}_{i} - i \tfrac{L}{N} \big)^{2} 
\end{equation*}
for some $ \zeta^{N}_{i} \in [ i \frac{L}{N} , s^{N}_{i} ] \cup [ s^{N}_{i} , i \frac{L}{N} ] $.
Then
\begin{eqnarray*} 
D_{i}^{N}
&:= & \Big( u( s^{N}_{i-1} ) + u( s^{N}_{i+1} ) - 2 u( s^{N}_{i} ) \Big)
- \Big( u( (i-1) \tfrac{L}{N} ) + u( (i+1) \tfrac{L}{N} ) - 2 u( i \tfrac{L}{N} ) \Big) \\
% & = & & u'( (i-1) \tfrac{L}{N} ) \big( s^{N}_{i-1} - (i-1) \tfrac{L}{N} \big) 
% + u''( \zeta^{N}_{i-1} ) \big(  s^{N}_{i-1} - (i-1) \tfrac{L}{N} \big)^{2} \\
% & & & + u'( (i+1) \tfrac{L}{N} ) \big( s^{N}_{i+1} - (i+1) \tfrac{L}{N} \big) 
% + u''( \zeta^{N}_{i+1} ) \big(  s^{N}_{i+1} - (i+1) \tfrac{L}{N} \big)^{2} \\
% & & & - 2 u'( i \tfrac{L}{N} ) \big( s^{N}_{i} - i \tfrac{L}{N} \big) 
% - 2 u''( \zeta^{N}_{i} ) \big(  s^{N}_{i} - i \tfrac{L}{N} \big)^{2}  \\
& = & u'( (i-1) \tfrac{L}{N} ) \big( s^{N}_{i-1} - (i-1) \tfrac{L}{N} \big) 
 + u'( (i+1) \tfrac{L}{N} ) \big( s^{N}_{i+1} - (i+1) \tfrac{L}{N} \big) 
 - 2 u'( i \tfrac{L}{N} ) \big( s^{N}_{i} - i \tfrac{L}{N} \big)  \\
&   & + u''( \zeta^{N}_{i-1} ) \big(  s^{N}_{i-1} - (i-1) \tfrac{L}{N} \big)^{2} + u''( \zeta^{N}_{i+1} ) \big(  s^{N}_{i+1} - (i+1) \tfrac{L}{N} \big)^{2} - 2 u''( \zeta^{N}_{i} ) \big(  s^{N}_{i} - i \tfrac{L}{N} \big)^{2}.
\end{eqnarray*}
By employing the bounds for $ l_{i}^{N} = s^{N}_{i} - s^{N}_{i-1} $ from \eqref{eq:l-i-bound}, we obtain
\begin{equation}
\label{eq:t-i-N}
| s^{N}_{i} - i \tfrac{L}{N} | 
= | l^{N}_{1} + \ldots + l^{N}_{i} - i \tfrac{L}{N} |
\le \sum_{j=1}^{i} | l^{N}_{j}  - \tfrac{L}{N} |
\le i C ( \tfrac{L}{N} )^{3} 
\le N C ( \tfrac{L}{N} )^{3}.
\end{equation}
Thus we can estimate the terms with second derivatives by
\begin{multline*} 
\Big| u''( \zeta^{N}_{i-1} ) \big(  s^{N}_{i-1} - (i-1) \tfrac{L}{N} \big)^{2} + u''( \zeta^{N}_{i+1} ) \big(  s^{N}_{i+1} - (i+1) \tfrac{L}{N} \big)^{2} - 2 u''( \zeta^{N}_{i} ) \big(  s^{N}_{i} - i \tfrac{L}{N} \big)^{2} \Big| \le \\
\le 3 \| u'' \|_{\i} \Big( N C ( \tfrac{L}{N} )^{3} \Big)^{2}.
\end{multline*}
For the terms with first derivatives, it holds that
\begin{align*} 
& \Big| u'( (i-1) \tfrac{L}{N} ) \big( s^{N}_{i-1} - (i-1) \tfrac{L}{N} \big) 
 + u'( (i+1) \tfrac{L}{N} ) \big( s^{N}_{i+1} - (i+1) \tfrac{L}{N} \big) 
 - 2 u'( i \tfrac{L}{N} ) \big( s^{N}_{i} - i \tfrac{L}{N} \big) \Big| \\
& \le  \ | s^{N}_{i} - i \tfrac{L}{N} |
\Big| u'( (i-1) \tfrac{L}{N} ) + u'( (i+1) \tfrac{L}{N} ) - 2 u'( i \tfrac{L}{N} ) \Big| + \\
&  \qquad + \Big| u'( (i-1) \tfrac{L}{N} ) \big( s^{N}_{i-1} - s^{N}_{i} + \tfrac{L}{N} \big) 
 + u'( (i+1) \tfrac{L}{N} ) \big( s^{N}_{i+1} - s^{N}_{i} - \tfrac{L}{N} \big) \Big| \\
& \le \  N C ( \tfrac{L}{N} )^{3} \cdot 2 \| u'' \|_{\i} \tfrac{L}{N} + 2 C ( \tfrac{L}{N} )^{3} \\
& =  \ ( \tfrac{L}{N} )^{3} \cdot 2 C ( L \| u'' \|_{\i} + 1 ).
\end{align*}
We have used that we may write
\[ \Big| u'( (i-1) \tfrac{L}{N} ) + u'( (i+1) \tfrac{L}{N} ) - 2 u'( i \tfrac{L}{N} ) \Big|
= \Big| u''( \hat{\zeta}_{i} ) ( - \tfrac{L}{N} ) + u''( \hat{\zeta}_{i+1} ) ) \tfrac{L}{N} \Big|
\le 2 \| u'' \|_{\i} \tfrac{L}{N} \]
for some $ \hat{\zeta}_{i} \in [ (i-1) \tfrac{L}{N} , i \tfrac{L}{N} ] $ 
and $ \hat{\zeta}_{i+1} \in [ i \tfrac{L}{N} , (i+1) \tfrac{L}{N} ] $, 
and that
\[ | s^{N}_{i+1} - s^{N}_{i} - \tfrac{L}{N} | = | l^{N}_{i+1} - \tfrac{L}{N} | \le C ( \tfrac{L}{N} )^{3}. \]
Hence,
\begin{equation*}
\left| \frac{ u( s^{N}_{i-1} ) + u( s^{N}_{i+1} ) - 2 u( s^{N}_{i} ) }{ ( \frac{L}{N} )^{2} } \right|^{2}
= \left| \frac{ u( (i-1) \frac{L}{N} ) + u( (i+1) \frac{L}{N} ) - 2 u( i \frac{L}{N} ) + D_{i}^{N} }{ ( \frac{L}{N} )^{2} } \right|^{2}
\end{equation*}
with
\begin{equation*}
| D^{N}_{i} | 
\le 2 C ( L \| u'' \|_{\i} + 1 ) \left( \frac{L}{N} \right)^{3}
+ 3 C L^2 \| u'' \|_{\i} \left( \frac{L}{N} \right)^{4}. 
\end{equation*}
Hence,
\begin{align*}
\lim_{ N \to \i } \| y_{N}'' \|_{ L^{2} }  
& = \lim_{ N \to \i } \frac{L}{N} \sum_{i=1}^{N-1} \left| \frac{ u( (i-1) \frac{L}{N} ) + u( (i+1) \frac{L}{N} ) - 2 u( i \frac{L}{N} ) + D^{N}_{i}  }{ ( \frac{L}{N} )^{2} } \right|^{2} \\
& = \lim_{ N \to \i } \frac{L}{N} \sum_{i=1}^{N-1} \left| \frac{ u( (i-1) \frac{L}{N} ) + u( (i+1) \frac{L}{N} ) - 2 u( i \frac{L}{N} )  }{ ( \frac{L}{N} )^{2} } \right|^{2} \\
& = \lim_{ N \to \i } \mathcal{B}_{N}(u) \\
& = \| u'' \|_{ L^{2} }^{2}.
\end{align*}

%%%%%%%%%%%%%%%%%%%%%%%%%%%%%%%%%%%%%%%%%%%%%%%%%%%%%%%%%%%%%%%%%%%%%%%%%%%%%%%%%%%%%%%%%%%%%%%%%%%%%%%%%%%%%%%%%%%%%%%%%%
\subsection{Convergence of torsional energy}
%%%%%%%%%%%%%%%%%%%%%%%%%%%%%%%%%%%%%%%%%%%%%%%%%%%%%%%%%%%%%%%%%%%%%%%%%%%%%%%%%%%%%%%%%%%%%%%%%%%%%%%%%%%%%%%%%%%%%%%%
Now, we will in a similar way show $ \| z_{N}' \|_{ L^{2} } \to \| \theta' \|_{ L^{2} } $.
First we notice that 
\begin{equation*}
\zeta_{N}'( \tau ) 
= \left\{ \begin{array}{cl}
0 , & \tau \le \tau_{1}, \\
\frac{ \varphi_{i+1}^{N} - \varphi_{i}^{N} }{ r_{N} }, & \tau_{i} \le \tau \le \tau_{i+1} \mbox{ for some } i \in \{ 1 , \ldots, N-1 \}, \\
0, & \tau_{N} \le \tau,
\end{array} \right.
\end{equation*} 
and therefore,
\begin{equation*}
\| z_{N}' \|_{ L^{2} }  
= \lambda(X) \int_{0}^{ \lambda(X) L} ( \zeta^{X, \Phi} )'( \tau )^{2} \ d \tau
= \lambda_{N} \sum_{i=1}^{N-1} r_{N} \left| \frac{ \varphi_{i+1}^{N} - \varphi_{i}^{N} }{ r_{N} } \right|^{2}.
\end{equation*}
By employing $ N r_{N} = \lambda_{N} L $, we get
\begin{equation*}
\| z_{N}' \|_{ L^{2} }  
= \lambda_{N} r_{N} \sum_{i=1}^{N-1} \left| \frac{ \varphi_{i+1}^{N} - \varphi_{i}^{N} }{ r_{N} } \right|^{2}
= \frac{L}{N} \sum_{i=1}^{N-1} \left| \frac{ \theta( \frac{ s_{i+1}^{N} + s_{i}^{N} }{2}  ) - \theta( \frac{ s_{i}^{N} + s_{i-1}^{N} }{2} ) }{ \frac{L}{N} } \right|^{2}.
\end{equation*}
Analogously to section~\ref{subsect:bend}, we reduce this problem to the case with the equidistant partition.
It is known \cite[Chapter 4]{MR1968440} that
\begin{equation*} 
\mathcal{T}_{N}( \theta ) 
:= \frac{L}{N} \sum_{i=1}^{N-1} \left| \frac{ \theta( (i+\frac{1}{2}) \frac{L}{N} ) - \theta( (i-\frac{1}{2}) \frac{L}{N} )  }{ \frac{L}{N} } \right|^{2} \to \| \theta' \|_{ L^{2} }^{2}. 
\end{equation*}
For each $i$ we may write
\begin{equation*} 
\theta( \tfrac{ s_{i-1}^{N} + s_{i}^{N} }{2} ) - \theta( (i-\tfrac{1}{2}) \tfrac{L}{N} ) 
= \theta'( \omega^{N}_{i} ) \big( \tfrac{ s_{i-1}^{N} + s_{i}^{N} }{2} - (i-\tfrac{1}{2}) \tfrac{L}{N} \big) 
\end{equation*}
for some $ \omega^{N}_{i} \in [ \tfrac{ s_{i-1}^{N} + s_{i}^{N} }{2} , (i-\tfrac{1}{2}) \tfrac{L}{N} ] \cup [ (i-\tfrac{1}{2}) \tfrac{L}{N} , \tfrac{ s_{i-1}^{N} + s_{i}^{N} }{2} ] $.
Then
\begin{eqnarray*} 
d_{i}^{N}
&:= & \Big( \theta( \tfrac{ s_{i+1}^{N} + s_{i}^{N} }{2}  ) - \theta( \tfrac{ s_{i}^{N} + s_{i-1}^{N} }{2} ) \Big)
- \Big( \theta( (i+\tfrac{1}{2}) \tfrac{L}{N} ) - \theta( (i-\tfrac{1}{2}) \tfrac{L}{N} ) \Big) \\
& = & \theta'( \omega^{N}_{i+1} ) \big( \tfrac{ s_{i+1}^{N} + s_{i}^{N} }{2} - (i+\tfrac{1}{2}) \tfrac{L}{N} \big)
- \theta'( \omega^{N}_{i} ) \big( \tfrac{ s_{i-1}^{N} + s_{i}^{N} }{2} - (i-\tfrac{1}{2}) \tfrac{L}{N} \big).
\end{eqnarray*}
Since by \eqref{eq:t-i-N} $ | s^{N}_{i} - i \tfrac{L}{N} | \le N C ( \tfrac{L}{N} )^{3} $, 
we have
\begin{equation*}
\left| \tfrac{ s_{i-1}^{N} + s_{i}^{N} }{2} - (i-\tfrac{1}{2}) \tfrac{L}{N} \right| 
= \left| \tfrac{ s_{i-1}^{N} - (i-1) \tfrac{L}{N} }{2} + \tfrac{ s_{i}^{N} - i \tfrac{L}{N} }{2} \right|
\le N C ( \tfrac{L}{N} )^{3}, 
\end{equation*}
and
\[ d_{i}^{N} \le 2 \| \theta' \|_{\i} C L ( \tfrac{L}{N} )^{2}. \]
Thus,
\begin{equation*}
\| z_{N}' \|_{ L^{2} }^2
= \frac{L}{N} \sum_{i=1}^{N-1} \left| \frac{ \theta( (i+\frac{1}{2}) \frac{L}{N} ) - \theta( (i-\frac{1}{2}) \frac{L}{N} ) + d_{i}^{N} }{ \frac{L}{N} } \right|^{2} 
\to \| \theta' \|_{ L^{2} }^2. 
\end{equation*}

\begin{remark} \label{rem:clamped_frame_recovery}
If clamped boundary conditions were desired, we now observe that once the curve was approximated according to Theorem \ref{theo:Hornung} and Remark \ref{rem:straight_ends}, we obtain the correct frames at the boundaries once $N$ is large enough. Indeed, when at least one interior discretization point lies on the straight part of the curve, the corresponding framed spline curve will have a constant frame near the boundary.
\end{remark}

\subsection{Conclusion}
\label{subsect:conclusion}
Lastly, as by \eqref{eq:lambda-bound} and \eqref{eq:r-N-bound} 

\[ 1 \ge \lambda_{N} \ge 1 - C \left( \tfrac{L}{N} \right)^{2} 
\quad \mbox{and} \quad
|  x_{i}^{N} - x_{i-1}^{N} | = r_{N} \le \tfrac{L}{N}, \]
we have
\[ \F_{N}^{\rm pen}(X_{N}) 
= N^{\alpha} | \lambda_{N} - 1 | + N^{\beta} \max_{i} |  x_{i}^{N} - x_{i-1}^{N} | 
\le N^{\alpha} \cdot C \left( \tfrac{L}{N} \right)^{2} + N^{\beta} \cdot \tfrac{L}{N}.
\]
Now, it becomes apparent why we chose $ \alpha < 2 $ and $ \beta < 1 $, as then
\[ \F_{N}^{\rm pen}(X_{N}) \to 0. \]
Hence, indeed
\begin{align*}
\lim_{ N \to \i } \F_{N}( y_{N} , z_{N} )
& = \lim_{ N \to \i } \big( \F^{\rm bend}( y_{N} ) + \F^{\rm tor}( z_{N} ) + \F^{\rm pen}_{N}( X_{N} ) \big) \\
& = \F^{\rm bend}( u ) + \F^{\rm tor}( \theta ) + 0 \\
& = \F( u , \theta ).
\end{align*} 

\section*{Acknowledgements}
PD acknowledges partial support from the DFG via project 441523275 in SPP 2256.
MJ was supported in part by Grant P1-0222 of the Slovenian Research Agency.
MJ acknowledges the support and hospitality of the University of Freiburg during his stay there in February 2022.
\printbibliography

\end{document}